\newtheorem{conj}{Conjecture}[section]
\newtheorem{thm}[conj]{Theorem}
\newtheorem{remark}[conj]{Remark}
\newtheorem{lem}[conj]{Lemma}
\newtheorem{prop}[conj]{Proposition}
\newtheorem{coro}[conj]{Corollary}
\newtheorem{defn}[conj]{Definition}
\newtheorem{cor}[conj]{Corollary}
\newtheorem{ex}[conj]{Example}
\newcommand\independent{\protect\mathpalette{\protect\independent}{\perp}} 
\def\independent#1#2{\mathrel{\rlap{$#1#2$}\mkern2mu{#1#2}}}
\newcommand{\supp}{\mathrm{Supp}}
\newcommand{\Conv}{\mathrm{Conv}}
\newcommand{\R}{\mathbb{R}}
\renewcommand{\P}{\mathbb{P}}
\newcommand{\E}{\mathbb{E}}
\newcommand{\Z}{\mathcal{Z}}
\newcommand{\conv}{\mathrm{conv}}
\newcommand{\N}{\mathbb{N}}
\renewcommand{\Z}{\mathbb{Z}}
\def\phi{\varphi}
\def\bee{\begin{eqnarray*}}
\def\ene{\end{eqnarray*}}
\DeclarePairedDelimiter\floor{\lfloor}{\rfloor}
\def\lc{\left\lceil}   
\def\rc{\right\rceil}
\def\lf{\left\lfloor}   
\def\rf{\right\rfloor}
\title{{ Geometric and Functional Inequalities for Log-Concave Probability Sequences}}
\author[1]{Arnaud Marsiglietti}
\author[2]{James Melbourne}
\affil[1]{Department of Mathematics\\
University of Florida\\
Gainesville, FL 32611, USA\\
E-mail: a.marsiglietti@ufl.edu, Phone: 352-294-2310}
\affil[2]{Probabilidad y Estad\'isticas\\
Centro de Investigaci\'ones en Matem\'aticas\\
Guanajuato, GTO 36023, MX}
\date{}
\begin{document}

\maketitle

\begin{abstract}

We investigate geometric and functional inequalities for the class of log-concave probability sequences.  We prove dilation inequalities for log-concave probability measures on the integers. A functional analogue of this geometric inequality is derived, giving large and small deviation inequalities from a median, in terms of a modulus of regularity. Our methods are of independent interest, we find that log-affine sequences are the extreme points of the set of log-concave sequences belonging to a half-space slice of the simplex. We use this result as a tool to derive simple proofs of several convolution type inequalities for log-concave sequences, due to Walkup, Gurvits, and Klartag-Lehec.  Further applications of our results are used to produce a discrete version of the Pr\'ekopa-Leindler inequality.

\end{abstract}

% AMS 2020 Mathematics Subject Classification: 52A40; 60E15

\vskip5mm

{\bf Keywords:} log-concave sequence, Krein-Milman, localization lemma, four function theorem, concentration inequality

\section{Introduction}

A sequence of positive numbers $p = \{p_0, p_1, \dots, p_n \}$ is called log-concave when it satisfies
\begin{align}
    p_i^2 \geq p_{i-1}p_{i+1}
\end{align}
for $1 \leq i \leq n-1$.  Such sequences occur naturally in a multitude of contexts.   In Probability and Statistics log-concavity is of interest in its connection with notions of negative dependence \cite{joag1983negative, Pem00, borcea2009negative}.  In Information Theory entropy maximizers among log-concave random variables have been studied in \cite{Joh07, JKM13, MT20:isit}.  Important sequences in Combinatorics are log-concave (or conjectured to be log-concave)  see \cite{schoenberg1955zeros, stanley1981two, stanley2000positivity, savage2015, BH20, HMMS22} for some examples.  Many log-concave sequences are proven such by the following result that goes back to Newton. If $\{p_i\}_{i=0}^m$ is a positive sequence of numbers such that $P(x) = \sum_{i=0}^m {m \choose i} p_i x^i$ is a polynomial with real zeros, then the sequence $p_i$ is log-concave.  In fact, positive sequences that produce real rooted polynomials in the manner described is a strictly stronger condition than usual log-concavity.  Such sequences are referred to as P\'olya frequency sequences, or real-rooted and are log-concave with respect to a binomial reference measure as we will describe later in this article. See \cite{pitman1997probabilistic} for probabilistic implications of a sequence being real-rooted.

The Alexandrov-Fenchel inequality \cite[Theorem 7.3.1]{schneider2014convex} provides another interesting source of log-concave sequences.  It is essentially due to Minkowski that the volume of convex bodies is a homogeneous polynomial. More explicitly, for compact convex sets $K_1$ and $K_2$ in $\mathbb{R}^d$ and $t_1, t_2 \geq 0$, there exist coefficients $V_i(K_1, K_2)$ such that 
\begin{align*}
    | t_1 K_1 + t_2 K_2|_d = \sum_{i=0}^d  {d \choose i} V_i(K_1,K_2) t_1^{d-i} t_2^i,
\end{align*}
where
$$ A + B = \{a+b : a \in A, b \in B\} $$
denotes the Minkowski sum of subsets $A, B \subset \R^d$, and $| \cdot |_d$ denotes the usual $d$-dimensional Lebesgue measure.  The Alexandrov-Fenchel inequality implies that the ``mixed volumes'' $V_i(K_1,K_2)$ form a log-concave sequence.  We direct the reader to \cite{mccoy2014steiner, amelunxen2014living, goldstein2017gaussian} for investigations of mixed volumes, in particular ``intrinsic volumes'', with applications to learning theory.

Discrete log-concave random variables, those given by a log-concave probability mass function, are a convolution stable class containing many fundamental discrete distributions, such as Bernoulli, binomial, geometric, hypergeometric, and Poisson distributions.
For further background on log-concavity see the survey papers \cite{Sta89, Bre94, saumard2014log, Br15}.

In this article we will pursue  geometric and functional inequalities for the class of log-concave probability sequences.  In particular we establish dilation inequalities for discrete log-concave probability measures in the form of Nazarov, Sodin and Volberg \cite{NSV02} (see also \cite{bobkov2008sharp}, \cite{Fra09}). More precisely, we prove in Theorem \ref{thm: geometric dilation inequailty} that if $\mu$ is a log-concave probability measure and $A \subset K$, where $K \subset \Z$ is a (possibly infinite) interval, then for all $\delta \in (0,1)$,
    \begin{align}\label{dilat}
    \mu(A) \geq \mu^\delta(A_\delta) \mu^{1-\delta}(K),
    \end{align}
where $A_{\delta}$ is defined in Definition \ref{defn: delta interior}.  We derive a functional version of \eqref{dilat}, that for $f \colon K \to \mathbb{R}$ and $\varepsilon >0$
\begin{align}
        \mu (\{ |f| > \lambda \varepsilon \}) \geq \mu^\delta (\{ |f| \geq \lambda \}),
    \end{align}
    where $\delta = \delta_f(\varepsilon)$ denotes the ``modulus of regularity'' of $f$.  As a corollary we attain small and large deviation inequalities for functions of log-concave random variables
    (see Corollary \ref{deviations}).
    Moreover we demonstrate that these inequalities reduce to sharp inequalities in the special case $f(x) = x$.  That is for $X$ log-concave on $\{1,2, \dots \}$ with median $\mathrm{Med}(X)$ and $t > 1$,
    \begin{align*}
        \mathbb{P}( X > \mathrm{Med}(X) t) \leq e^{- \log(2) t/2}, \hspace{5mm} \mathbb{P}( X \leq \mathrm{Med}(X)/t ) \leq 1 - e^{- 2 \log (2) /t}.
    \end{align*} 
    
An important reduction in the proof of the dilation inequality is obtained through identifying the extreme points of a half-space slice of the log-concave sequences to be log-affine sequences.   This phenomena of constrained optimization for ``concave measures''  reducing to constrained optimization for ``affine measures'' is well known in the continuous setting.  It can be understood as a discrete analogue of the localization technique \cite{LS93, KLS95, FG04,FG06, Eld13, Sergey2015localization, Sergey2016ProbabilitySurveys, Klar17} utilized in Asymptotic Convex Geometry and Computer Science for proving isoperimetric and concentration type inequalities (see, e.g., \cite{LS93,Dyer-Frieze,KLS95,Gue99,NSV02,NSV03,Bob07:gafa,Fra09,BM11:aop}), improving the algorithmic complexity of computing the volume of convex bodies (see, e.g., \cite{LS93,KLS95,KLS97,CV18}), and in particular making progress towards the solution of the Kannan-Lov\'asz-Simonovits (KLS) conjecture (see \cite{LV17:1,LV17:2}).  However, in contrast to the continuous setting, this approach is general, and can be used with respect to an arbitrary reference measure, not just the counting measure as would be anticipated from the continuous theory. More precisely, we prove that
$$  \sup_{\P_X \in \mathcal{P}_h(\llbracket M,N \rrbracket)} \Phi(\P_X) $$
is attained at the distribution of a random variable $X$ whose probability mass function is log-affine. Here, $\P_X$ denotes the distribution associated with a random variable $X$, $\Phi$ is an arbitrary convex function on $\mathcal{P}_h(\llbracket M,N \rrbracket)$ the set of all discrete log-concave distributions $\P_X$ supported on $\{M, \dots, N\}$, $M,N \in \Z$, satisfying $\E[h(X)] \geq 0$ for an arbitrary function $h \colon \{M, \dots, N\} \to \R$. As mentioned above, a more general statement involving log-concavity with respect to an arbitrary reference measure is also available (see Corollary \ref{maximize}).

We will discuss several other applications of our methods. For example, we obtain a Four Function theorem, which asserts that the inequality
\begin{equation*}
\E[f_1(X)]^\alpha \E[f_2(X)]^\beta \leq  \E[f_3(X)]^\alpha \E[f_4(X)]^\beta
\end{equation*}
holds for all $X$ log-concave random variable with respect to a reference measure $\gamma$ if and only if it holds for all log-affine random variable with respect to $\gamma$, where $f_1,f_2, f_3,f_4$ are nonnegative functions and $\alpha, \beta >0$ (Theorem \ref{thm: four function theorem}). We provide a simple proof of Walkup's theorem on the stability of the convolution of ultra log-concave sequences (Corollary \ref{Walkup}), and other convolution type inequalities (Corollaries \ref{KL} and \ref{Gur}). We also establish a discrete Pr\'ekopa-Leindler inequality in Theorem \ref{thm: main}. If $f$ and $g$ are nonnegative unimodal functions on $\Z$ and $\mu$ is a discrete log-concave measure, then the following discrete Pr\'ekopa-Leindler inequality holds
\begin{align}\label{discretePL}
    \int f \square_t g(z) d\mu(z) \geq \left( \int f(z) d\mu(z) \right)^{1-t} \left( \int g(z) d \mu(z) \right)^{t},
\end{align}
where $f \square_t g(z) = \sup_{\{(x,y):|(1-t)x + t y - z| < 1 \}} f^{1-t}(x) g^t(y)$. We will demonstrate how this inequality complements recent progress on Pr\'ekopa-Liendler inequalities in the discrete setting (see \cite{KL19, HKS, GRST19, S20}). In particular \eqref{discretePL} applied to indicators will be crucial in establishing the dilation inequality \eqref{dilat}.

This article can also be viewed as part of the recent trend on the so-called ``discretization of convex geometry'' where one wants to translate results from convex geometry to the discrete setting. Recent developments include discrete analogue of the Brunn-Minkowski inequality (see, e.g., \cite{GG01, OV12, KL19, HKS, LNZ20, GRST19, S20}), discrete analogue of Koldobsky's slicing inequality (see \cite{AHZ17}), discrete analogue of Aleksandrov's theorem (see \cite{RYZ17}).

The paper is organised as follows. In Section 2, we review the background on generalized log-concave random variables, and give examples of such classes of variables from the literature.  We then characterize the extreme points of half slices of the space of log-concave sequences
and recall how the Krein-Milman theorem can be used to reduce constrained optimization problems in this context. All applications are given in Section 3. We start by recovering a ``four function theorem'', popular in the continuous setting \cite{KLS95}, and then give simple proofs of several convolution inequalities by Fekete-P\'olya, Keilson-Gerber, and Hoggar \cite{fekete1912problem, KG71, hoggar1974chromatic}, Walkup \cite{Wal76}, Klartag-Lehec \cite{KL19}, and Gurvits \cite{gurvits2009multivariate}.  We then prove a discrete Pr\'ekopa-Leindler inequality, and utilize it in the subsequent dilation inequality in Theorem \ref{thm: geometric dilation inequailty}, which is followed by its functional corollaries.  We close the paper deriving large and small deviation inequalities in terms of quantiles.

\section{Characterization of Extreme Points} \label{sec: Extreme point char}

Throughout, $\Z$ denotes the set of integers equipped with its usual Euclidean structure $|\cdot|$. For $a, b \in \mathbb{Z}$ such that $a \leq b$, let us denote $\llbracket a, b \rrbracket = \{ x \in \mathbb{Z}: a \leq x \leq b \}$, and for $n \in \N$, let us denote $\llbracket n \rrbracket = \llbracket 0,n \rrbracket$. We will also use $\llbracket a , b \llbracket$ to denote $\{ x \in \mathbb{Z}: a \leq x < b \}$, $\rrbracket a , b \rrbracket$ to denote $\{ x \in \mathbb{Z}: a < x \leq  b \}$, and so on.
%, and $\llbracket a \rrbracket = \llbracket 0, a \rrbracket$.

\begin{defn}

    A function $f \colon \mathbb{Z} \to [0,\infty)$ is log-concave when it satisfies
    \begin{align} \label{eq: simple log-concave equation}
        f^2(n) \geq f(n-1) f(n+1)
    \end{align}
    for all $n \in \Z$ and for all $a \leq b$, $a,b \in \{ f > 0\}$ implies $\llbracket a, b \rrbracket \subseteq \{ f > 0 \}$.  
\end{defn}

%Equivalently, one can omit the requirement on the support by requiring that $f$ satisfy $f(k + m) f(k+p) \geq f(k) f(k+m+p)$
% \begin{align} \label{eq: log-concave equation}
%     f(k + m) f(k+p) \geq f(k) f(k+m+p)
% \end{align}
%for all $k, m, p \in \mathbb{N}$.
% Note that, from the definition, a log-concave function $f \colon \mathbb{Z} \to [0,\infty)$ has contiguous support. The next statement is standard and provides a characterization of discrete log-concavity on the set of natural numbers $\N$. We include a proof for completeness.

%\begin{prop}

% A function $f \colon \mathbb{N} \to [0,\infty)$ is log-concave if and only if it satisfies

% % and $NM \geq 0$.

% \end{prop}

% \begin{proof}
% Assume \eqref{eq: log-concave equation} holds. Inequality \eqref{eq: simple log-concave equation} is obtained by taking $k = n-1$, $m = p = 1$. Let us show that the support is contiguous. For $a < b$ satisfying $f(a) f(b) > 0$, take $k = a$, $p=1$, $m = b-a -1$, to see that $ f(a+1) f(b - 1) > 0$ as well. A proof by induction concludes.

% For the converse, assume that $f$ is log-concave. Note that when
% $$ f(k) f(k+1) \cdots f(k+m+p-1) > 0, $$
% inequality \eqref{eq: simple log-concave equation} gives
% \[
%     \frac{f(k+1)}{f(k)} \geq \frac{f(k+2)}{f(k+1)} \geq \cdots \geq \frac{f(k+p+m)}{f(k+p-1+m)}.
% \]
% %Multiplying the inequality, when $f(k) f(k+1) \cdots f(k+m+p-1) > 0$, we have
% Hence,
% \[
%      \frac{ f(k+p)}{f(k)} = \prod_{l=0}^{p-1} \frac{f(k +l+1)}{f(k+l)} \geq \prod_{l=0}^{p-1} \frac{f(k+l+1+m)}{f(k+l+m)} = \frac{f(k+m+p)}{f(k+m)}.
% \]
% \end{proof}

\begin{defn}

    A function $f \colon \mathbb{Z} \to [0,\infty)$ is log-affine if it satisfies
    \begin{align} \label{eq: simple log-affine equation}
        f^2(n) = f(n-1) f(n+1)
    \end{align}
    when $n-1,n,n+1 \in \{f>0\}$ and has contiguous support.
    
\end{defn}

We now introduce the class of integer valued random variables that we will work with. First, let us recall that the probability mass function (p.m.f.) associated with an integer valued random variable $X$ is
$$ p(n) = \mathbb{P}(X = n), \quad n \in \Z. $$
For an integer valued measure $\gamma$ with mass function $q$, defined as $q(n) = \gamma(\{n\})$, $n \in \Z$, and a random variable $X$ with p.m.f. $p$, we will call p.m.f. of $X$ with respect to $\gamma$ the ratio
$$ f(n) = \frac{p(n)}{q(n)}, \quad n \in \Z, $$
which equals 0 when $q(n) = 0$ by convention.

\begin{defn}[Generalized log-concave random variables]

Let $\gamma$ be an integer valued measure with a contiguous support on $\mathbb{Z}$. A random variable $X$ on $\mathbb{Z}$ is log-concave with respect to $\gamma$ when its p.m.f. with respect to $\gamma$ is a log-concave function.

\end{defn}

\begin{ex}[log-concave random variables]

The class of discrete log-concave random variables corresponds to taking $\gamma$ to be the counting measure, that is, with mass function $q \equiv 1$. In particular, log-concave random variables are those with a log-concave p.m.f.

\end{ex}

Most fundamental discrete random variables fall into the class of log-concave random variables. For example, Bernoulli, binomial, geometric, hypergeometric, and Poisson random variables are all log-concave.

The following sub-class of discrete log-concave random variables can be seen as an analogue of the strongly log-concave random variables in the continuous setting (that is, log-concave with respect to a Gaussian).

\begin{ex}[Ultra-log-concave random variables \cite{Pem00}]

A random variable $X$ on $\N$ is ultra log-concave when its p.m.f. with respect to $\gamma$, the law of a Poisson distribution, is log-concave.

\end{ex}

Note that an ultra-log-concave random variable has a contiguous support and a probability mass function $p$ satisfying the following inequality
$$ p^2(n) \geq \frac{n+1}{n} p(n+1)p(n-1), \quad n \geq 1. $$

\begin{ex}[Ultra-log-concave random variables of order $m$ \cite{Pem00}]
A random variable $X$ on $\N$ is ultra log-concave of order $m$ when its p.m.f. with respect $\gamma$, the law of a Binomial distribution $B(m,1/2)$, is log-concave.  Stated quantitatively, this corresponds to $X$ supported on $\llbracket m \rrbracket$ and its mass function $p$ satisfies
\begin{align*}
    p^2(n) \geq \frac{(n+1)(m-n+1)}{n(m-n)} p(n+1) p(n-1).
\end{align*}
\end{ex}

Note that $\frac{(n+1)(m-n+1)}{n(m-n)}$ is decreasing in $m$, so that the class of ultra-log-concave variables of order $m$ is contained in the ultra-log-concave variables of order $m'$, for $m' \geq m$.  Taking the limit $m \to \infty$ we obtain the ultra-log-concave variables.  As mentioned in the introduction, it is a classical result going back to Newton (see \cite{Sta89} for a proof), that if $b_i$ denote the coefficients of a degree $m$ polynomial $P(x)$ with real zeros, then the sequence $b_i$ is ultra log-concave of order $m$.

\begin{ex}[$q$-factor log-concavity \cite{mcnamara2010infinite}]
A random variable $X$ on $\N$ is $q$-factor log-concave (or $q$-weighted log-concave \cite{wang2008q}) for $q >0$ when its p.m.f. with respect to the measure $\gamma(\{n\}) = q^{-n^2/2}$ is log-concave.  This is equivalent to the statement that on its contiguous support the mass function $p$ satisfies
\begin{align*}
    p^2(n) \geq q p(n+1)p(n-1).
\end{align*}
\end{ex}

%In particular, an ultra-log-concave random variable is log-concave.

% 
% \begin{ex}[$m$-Intrinsic volume log-concavity]
%     Define $\gamma$ a measure on $\{1,2, \dots, m\}$ by
%     \begin{align}
%         \gamma_m(n) = \frac 1 {\omega_{m-n}} {m \choose n}
%     \end{align}
%     where $\omega_{n}$ is the volume of the $n$-dimensional unit ball. 
% \end{ex}
% It is a consequence of the Aleksandrov-Fenchel inequality that the intrinsic volumes of an $m$-dimensional convex body form a log-concave sequence with respect to $\gamma_m$.  For a sequence $\rho$ this reduces to
%     \begin{align}
%         p^2(n) 
%             &\geq     \left(\frac{ {m \choose n }^2 \omega_{m-n-1} \omega_{m-n+1}}{{m \choose n-1}{m \choose n+1} \omega_{m-n}^2} \right) p(n-1) p(n+1)
%                 \\
%             &=
%                 \frac{ n+1}{n} \beta_{m-n} p(n-1)p(n+1).
%     \end{align}
%     and 
%     \begin{align}
%         \beta_k = \frac{\Gamma(k/2) \Gamma(1 + k/2)}{\Gamma(1/2 + k/2)} > 1.
%     \end{align}
% Thus by the log-concavity of $d \mapsto \omega_d$ and the log-convexity of the Gamma function, $m$-Intrinsic Volume log-concavity is strictly weaker than ultra log-concavity of order $m$, but strictly stronger than ultra log-concavity for $m = \infty$.

We next describe the class of log-affine random variables.

\begin{defn}[Generalized log-affine random variables]

Let $\gamma$ be an integer valued measure with a contiguous support on $\mathbb{Z}$ and mass function $q$. A random variable $X$ on $\mathbb{Z}$ with p.m.f. $p$ is log-affine with respect to $\gamma$ when $\frac{p}{q}$ is a log-affine function.

\end{defn}

The next simple proposition characterizes log-affine random variables.

\begin{prop}

If $X$, with p.m.f. $p$, is log-affine with respect to $\gamma$, with mass function $q$, then for all $n \in \{p > 0\}$,
$$ p(n) = C \lambda^n q(n), $$
for some constants $C>0$ and $\lambda \geq 0$.

\end{prop}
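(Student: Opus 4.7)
The plan is to read off log-affinity as the statement that the consecutive-ratio $f(n+1)/f(n)$, with $f := p/q$, is constant on the support of $p$, and then telescope. On the contiguous support $S$ of $p$ one has $p(n), q(n) > 0$, hence $f(n) > 0$, and at every interior point of $S$ the defining identity $f^2(n) = f(n-1)f(n+1)$ rearranges to
\[
\frac{f(n+1)}{f(n)} \;=\; \frac{f(n)}{f(n-1)}.
\]
Hence this forward ratio is independent of $n$; I will denote the common value by $\lambda$, which is nonnegative as a ratio of nonnegatives.

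Next I would fix a base point $n_0 \in S$ and induct on $|n - n_0|$ using the recursion $f(n+1) = \lambda f(n)$ (and, when $\lambda > 0$, the reverse $f(n-1) = \lambda^{-1} f(n)$) to obtain $f(n) = f(n_0)\lambda^{n - n_0}$ for every $n \in S$. Setting $C := f(n_0)\lambda^{-n_0}$ then yields the claimed identity $p(n)/q(n) = C\lambda^n$, with $C > 0$ because $f(n_0) > 0$.

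The only points requiring mild care are the endpoints of $S$, where the log-affine identity is not invoked, and the degenerate case $\lambda = 0$, in which the forward recursion forces $f$ to vanish immediately beyond $n_0$ and collapses $S$ to the singleton $\{n_0\}$ (in which case any $C > 0$ and any $\lambda \geq 0$ realize the formula with the convention $0^0 = 1$). I do not foresee a substantive obstacle: the proposition is a direct discrete translation of the classical observation that a function whose logarithm has vanishing second difference must be affine, and the proof is in essence a one-line telescoping argument once log-affinity has been rewritten as constancy of the consecutive ratio.
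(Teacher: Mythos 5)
Your proof takes essentially the same route as the paper's: both rewrite the log-affine identity as the statement that the consecutive ratio $f(n+1)/f(n)$ (their $r(n+1)/r(n)$) is constant and then telescope. You are marginally more careful than the paper — you anchor at a general base point $n_0$ in the support rather than implicitly assuming $0$ and $1$ lie in the support (the paper sets $C = r(0)$, $\lambda = r(1)/r(0)$), and you explicitly address the degenerate singleton case $\lambda = 0$; but these are refinements of the same one-line telescoping argument, not a different method.
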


\begin{proof}
%Let us denote by $p$ (resp. $q$) the p.m.f. of $\mu$ (resp. $\gamma$).
One has $\{p>0\} = \llbracket a,b \rrbracket$, with $-\infty \leq a \leq b \leq +\infty$. Since $X$ is log-affine with respect to $\gamma$, we have
$$ \frac{r(n)}{r(n-1)} = \frac{r(n+1)}{r(n)}, $$
where $r(n) = p(n)/q(n)$. The ratio being constant, we deduce that $r(n) = \lambda \, r(n-1)$, where $\lambda = r(a+1)/r(a)$. Hence, $p(n) = C \lambda^n q(n)$, with $C = r(a)/\lambda^a$.
\end{proof}

We will now describe the extreme points of a class of discrete log-concave probability distributions satisfying a linear constraint. As in the continuous setting, those will be log-affine on their support.

Let us denote by $\mathcal{P}(\Z)$ the set of all probability measures supported on $\Z$. For $M, N \in \Z$, let us denote by $\mathcal{P}(\llbracket M,N \rrbracket)$ the set of all probability measures supported on $\llbracket M,N \rrbracket$. Let $\gamma$ be a measure with contiguous support on $\mathbb{Z}$, and let $h \colon \llbracket M,N \rrbracket \to \mathbb{R}$ be an arbitrary function. Let us consider $\mathcal{P}_h^{\gamma}(\llbracket M,N \rrbracket)$ the set of all distributions $\P_X$ in $\mathcal{P}(\llbracket M,N \rrbracket)$, log-concave with respect to $\gamma$, and satisfying $\E[h(X)] \geq 0$, that is,
$$ \mathcal{P}_h^{\gamma}(\llbracket M,N \rrbracket) = \{ \P_X \in \mathcal{P}(\llbracket M,N \rrbracket) : X \mbox{ log-concave with respect to } \gamma, \, \E[h(X)] \geq 0 \}. $$

Recall that the convex hull of a set $A \subset \R^d$, denoted by $\conv(A)$, is the smallest convex set containing $A$, and recall that a point $p$ is extremal in a convex set $C \subset \R^d$ if for all $\lambda \in (0,1)$  and all $x,y \in C$, if $p = (1-\lambda)x + \lambda y$ then $p=x=y$. We claim that if $\P_X$ is an extreme point of $\Conv(\mathcal{P}_h^\gamma(\llbracket M,N \rrbracket))$ then its p.m.f. $f$ with respect to $\gamma$ is of the form $f(n) = C p^n$ on a contiguous interval.

\begin{thm}\label{extreme}

    If $\P_X \in \Conv(\mathcal{P}_h^\gamma(\llbracket M,N \rrbracket))$ is an extreme point, then $f$, the p.m.f. of $X$ with respect to $\gamma$, satisfies
    \begin{equation}\label{extremizers}
        f(n) = C p^n \mathbbm{1}_{\llbracket k, l \rrbracket}(n) ,
    \end{equation}
    for some $C, p >0$, $k,l \in \llbracket M,N \rrbracket$.
    
\end{thm}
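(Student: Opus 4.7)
The plan is to argue by contradiction: assume that $\P_X$ is an extreme point of $\Conv(\mathcal{P}_h^\gamma(\llbracket M,N \rrbracket))$ with p.m.f.\ $f$ (with respect to $\gamma$) supported on $\llbracket k, l \rrbracket \subseteq \llbracket M, N \rrbracket$, but that $f$ is not of the form $Cp^n \mathbbm{1}_{\llbracket k, l \rrbracket}$, and construct a non-trivial convex decomposition $f = \tfrac{1}{2}(f_+ + f_-)$ with $f_+ \neq f_-$ both belonging to $\mathcal{P}_h^\gamma(\llbracket M, N \rrbracket)$.

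I would split the interior of the support according to whether log-concavity is tight or strict:
\[
    I = \{ n \in \llbracket k+1, l-1 \rrbracket : f(n)^2 = f(n-1) f(n+1) \}, \qquad J = \llbracket k+1, l-1 \rrbracket \setminus I.
\]
The assumed failure of $f$ to be log-affine on its support amounts to $|J| \geq 1$. The goal is a perturbation $v \colon \llbracket k, l \rrbracket \to \R$ such that $f_\pm := f \pm \epsilon v$ lies in $\mathcal{P}_h^\gamma(\llbracket M,N \rrbracket)$ for small $\epsilon > 0$. Expanding the defining constraints in $\epsilon$, the requirements on $v$ turn out to be purely linear: $\sum_n q(n) v(n) = 0$ (probability); $2 f(n) v(n) = f(n-1) v(n+1) + f(n+1) v(n-1)$ at each $n \in I$ (needed to kill the first-order term in the log-concavity inequality, which would otherwise change sign with $\epsilon$); and $\sum_n h(n) q(n) v(n) = 0$ whenever $\E[h(X)] = 0$ (if $\E[h(X)] > 0$ the constraint is slack for small $\epsilon$ and is dropped).

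A dimension count makes the existence of a non-trivial solution clear. The $|I|$ recurrence relations are linearly independent---ordered by $n$, each one introduces a new rightmost coordinate $v(n+1)$---so together with the at most two remaining linear conditions they cut out a subspace of $\R^{l-k+1}$ of dimension at least $(l-k+1) - |I| - 2 = |J| \geq 1$. The genuinely delicate step is the \emph{second}-order log-concavity check at the tight points $n \in I$, which demands $v(n)^2 \geq v(n-1) v(n+1)$. This is where the algebraic structure of $f$ must be exploited: solving the first-order recurrence on a maximal log-affine sub-block $\llbracket a, a' \rrbracket$ of $\llbracket k, l \rrbracket$ on which $f(m) = C p^{m-a}$ forces $v(m) = (A + B m) p^{m-a}$ on the block, and a direct computation yields $v(n)^2 - v(n-1) v(n+1) = B^2 p^{2(n-a)} \geq 0$. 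This is the main obstacle I anticipate, and its resolution hinges on the observation that a first-order log-concavity-preserving perturbation of a log-affine sequence is itself automatically log-affine, hence second-order compatible.

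Given such a non-zero $v$, for small $\epsilon > 0$ the functions $f \pm \epsilon v$ are positive on $\llbracket k, l \rrbracket$ (vanishing outside), integrate to one against $q$, are log-concave with respect to $\gamma$, and satisfy $\E[h(X_\pm)] \geq 0$. Hence $f_\pm \in \mathcal{P}_h^\gamma(\llbracket M,N \rrbracket)$; they are distinct since $v \not\equiv 0$; and the identity $f = \tfrac{1}{2}(f_+ + f_-)$ then contradicts the assumed extremality of $\P_X$, completing the proof.
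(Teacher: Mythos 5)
Your proposal is correct in substance, but it follows a genuinely different route from the paper's. The paper does not argue by perturbation; it fixes any $k$ with $f(k)>0$, introduces a one-parameter family of log-affine comparison functions $g_\alpha(m) = \tfrac12 f(k)e^{\alpha(m-k)}$, and uses a preparatory lemma (Lemma~\ref{log-affine-conc}) showing that both $(f-g_\alpha)_+$ and $f\wedge g_\alpha$ are log-concave whenever $g_\alpha$ is log-affine. The identity $f = (f-g_\alpha)_+ + f\wedge g_\alpha$ then exhibits $\P_X$ as a convex combination of two log-concave laws for every $\alpha$, and an intermediate value argument in $\alpha$ (using that sending $\alpha \to \pm\infty$ swaps the two pieces) produces an $\alpha^*$ at which both pieces also satisfy $\E[h]\geq 0$. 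Extremality then forces $f$ to be a multiple of $g_{\alpha^*}$, hence log-affine. Your route instead is the classical linear-algebraic picture of extreme points as vertices where enough constraints are saturated: you identify the saturated log-concavity constraints $I$, verify that the resulting linear system on perturbations $v$ has corank at least $|J|$, and then check that the surviving degrees of freedom automatically respect the quadratic log-concavity conditions at the tight sites. Both are valid; yours avoids the auxiliary lemma on $(f-g)_+$ and $f\wedge g$ and is closer to a standard vertex-counting argument, while the paper's bisection is shorter once that lemma is in hand and generalizes more readily (it is the same mechanism used in the continuous localization lemma of Fradelizi--Gu\'edon).

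One small imprecision: you say ``a first-order log-concavity-preserving perturbation of a log-affine sequence is itself automatically log-affine.'' That is not quite right---$v(m)=(A+Bm)p^{m-a}$ is not log-affine unless $B=0$ (indeed $A+Bm$ may change sign). What the computation actually shows, and what you correctly display, is $v(n)^2 - v(n-1)v(n+1) = B^2 p^{2(n-a)} \geq 0$; equivalently, the perturbed sequence $f\pm\epsilon v = (C\pm\epsilon A \pm\epsilon B m)p^{m-a}$ is of the form (linear polynomial)$\times$(geometric), and any such sequence is log-concave where positive. This is the fact you need, and it holds, so the argument goes through; the verbal gloss should just be corrected. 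You should also handle explicitly the degenerate cases where the support has one or two points (then there are no interior points, $J=\emptyset$, and $f$ is trivially of the required form), and note that the log-concavity constraints at the endpoints $n=k,l$ impose nothing on $v$ since $f(k-1)=f(l+1)=0$.
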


The arguments in the proof are analogous to the continuous setting (see \cite{FG04}). Before proving Theorem \ref{extreme}, we establish an intermediary lemma.

\begin{lem}\label{log-affine-conc}

If $f,g \colon \N \to [0,+\infty)$ are log-concave then the function $f\wedge g$ is log-concave, where $(f\wedge g)(n) = \min \{ f(n), g(n)\}$.  If we further assume that $g$ is log-affine,
%in the sense that $g(k+N)g(k+M) = g(k) g(k+M+N)$ when $g(k)g(k+M+N)>0$
then $(f-g)_+$ is log-concave as well, where $(f-g)_+ = \max(0, f-g)$.

\end{lem}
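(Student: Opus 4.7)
For the first claim, the support of $f \wedge g$ equals $\mathrm{supp}(f) \cap \mathrm{supp}(g)$, the intersection of two contiguous subsets of $\N$, which is itself contiguous. At an interior point $n$ of this common support, by symmetry I may assume $f(n) \le g(n)$, so $(f \wedge g)(n) = f(n)$. The log-concavity of $f$ together with the pointwise bound $f \wedge g \le f$ then gives
\[
(f \wedge g)(n)^2 = f(n)^2 \ge f(n-1) f(n+1) \ge (f \wedge g)(n-1)\,(f \wedge g)(n+1),
\]
which is the required log-concavity inequality; at boundary points of the support one of the neighbors lies outside, so the right-hand side vanishes and the inequality is trivial.

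For the second claim I would first isolate the one-variable analytic fact that $\phi(t) := \log(e^t - 1)$ is strictly increasing and strictly concave on $(0, \infty)$, verified by the direct calculation $\phi'(t) = e^t/(e^t - 1) > 0$ and $\phi''(t) = -e^t/(e^t - 1)^2 < 0$. Since $g$ is log-affine, $g(n) = C \lambda^n$ on its support. The key manipulation is to set $r(n) := \log(f(n)/g(n))$ on the common positivity set: it is the difference of the concave function $\log f$ and the affine function $\log g$, hence concave, and $\{r > 0\}$ coincides with the support of $(f - g)_+$ there. Since $\{r > 0\}$ is a super-level set of a discrete concave function on an interval, it is itself an interval, yielding the contiguous-support condition.

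On this interval I would then write
\[
\log\bigl((f - g)_+(n)\bigr) = \log g(n) + \log\bigl(e^{r(n)} - 1\bigr) = \log g(n) + \phi(r(n)).
\]
The first summand is affine in $n$, and the second is the composition of an increasing concave function with a concave function, hence concave: to verify the latter, apply the monotonicity of $\phi$ to the concavity inequality for $r$ and then the concavity of $\phi$. Thus $\log(f - g)_+$ is concave on its support, i.e.\ $(f - g)_+$ is log-concave.

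The main obstacle I anticipate is the support bookkeeping when $g$ has strictly smaller support than $f$: on the set $\mathrm{supp}(f) \setminus \mathrm{supp}(g)$ one has $(f - g)_+ = f > 0$, while inside $\mathrm{supp}(g)$ the support of $(f - g)_+$ is the sub-interval $\{r > 0\}$, and these two pieces need not glue into a contiguous set. I would finesse this by replacing $g$ with its log-affine extension $\tilde g(n) := C \lambda^n$ on all of $\N$ before forming the positive part — this is the setting in which the lemma is applied in the localization argument of Theorem~\ref{extreme} — which restores contiguity and lets the concave-composition argument run verbatim.
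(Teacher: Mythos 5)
For the first claim your argument is essentially the paper's, just phrased by choosing whichever of $f,g$ realizes the minimum at $n$; the paper instead notes that both $f^2(n)$ and $g^2(n)$ dominate $(f\wedge g)(n-1)(f\wedge g)(n+1)$ and takes the min. Same calculation.

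For the second claim you take a genuinely different route. The paper establishes the three-point inequality directly:
\[
(f-g)(n) \;\ge\; \sqrt{f(n-1)f(n+1)} - \sqrt{g(n-1)g(n+1)} \;\ge\; \sqrt{(f-g)_+(n-1)(f-g)_+(n+1)},
\]
where the first step uses $f^2(n)\ge f(n-1)f(n+1)$ and $g^2(n)=g(n-1)g(n+1)$, and the second is the superadditivity of the geometric mean (reverse Minkowski at exponent $0$, i.e.\ Mahler's inequality $\sqrt{(a_1+b_1)(a_2+b_2)}\ge\sqrt{a_1a_2}+\sqrt{b_1b_2}$). You instead factor out the log-affine reference: with $r=\log(f/g)$ concave and $\phi(t)=\log(e^t-1)$ increasing concave, you write $\log(f-g)_+ = \log g + \phi(r)$ and conclude by composition. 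Both are correct; your decomposition makes the mechanism more conceptual (log-affine $g$ becomes an affine gauge and the positive part is controlled by a single one-variable concave profile), while the paper's is shorter and avoids introducing $\phi$. Your derivation of contiguous support from the super-level set $\{r>0\}$ of a concave function is also a clean replacement for the paper's slope-comparison argument.

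On the support worry: it does not actually arise under the paper's definition. Log-affineness requires $g^2(n)=g(n-1)g(n+1)$ for every $n\ge 1$, and this forces $g$ on $\N$ to be identically zero, a multiple of $\delta_0$, or strictly positive everywhere (if $g(k)=0<g(k+1)$ then $g^2(k+1)=g(k)g(k+2)=0$, a contradiction; symmetrically for a trailing zero). In the full-support case your argument runs as written; in the Dirac case $(f-g)_+$ differs from $f$ only at $n=0$, and log-concavity is immediate. So the extension ``finesse'' is unnecessary for the lemma as stated --- though the caution is well placed, since the paper's own contiguity argument tacitly divides by $g(n-1)$ and $g(n+k-1)$ without flagging that these must be nonzero.
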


\begin{proof}
Clearly $f \wedge g$ has contiguous support.  Hence it suffices to prove $(f\wedge g)^2(n) \geq (f\wedge g)(n-1) (f \wedge g)(n+1)$.  Since $g^2(n) \geq   g(n-1) g(n+1) \geq  (f\wedge g)(n-1) (f \wedge g)(n+1)$, and similarly $f^2(n) \geq (f\wedge g)(n-1) (f \wedge g)(n+1)$, we have
\[
    (f\wedge g)^2(n) \geq (f\wedge g)(n-1) (f \wedge g)(n+1).
\]
Assume now that $g$ is log-affine. If $f \leq g$ there is nothing to prove, so suppose that $(f-g)(n)>0$.  If $f(n \pm 1) \leq g(n \pm 1)$ the inequality $(f-g)_+^2(n) \geq (f-g)_+(n-1)(f-g)_+(n+1)$ holds immediately.  Else, log-concavity of $f$ and affineness of $g$, 
\begin{align*}
    (f-g)(n) 
        &\geq 
            \sqrt{f(n+1)f(n-1)} - \sqrt{g(n+1)g(n-1)} 
                \\
            &\geq 
                \sqrt{(f-g)_+(n-1)(f-g)_+(n+1)},
\end{align*}
where we have used the fact that Minkowski's inequality for $L^p$ norms reverses when $p \leq 1$ and that $(x_1,x_2) \mapsto \sqrt{x_1 x_2}$ corresponds to $p=0$. It remains to show that $(f-g)_+$ has contiguous support. Let $n \geq 1$ such that $f(n-1) > g(n-1)$ while $f(n) \leq g(n)$, then for any $k \geq 1$
\begin{align*}
    \frac{g(n+k)}{g(n+k-1)} = \frac{g(n)}{g(n-1)} > \frac{f(n)}{f(n-1)} \geq \frac{f(n+k)}{f(n+k-1)},
\end{align*}
where the last inequality follows from log-concavity of $f$. Thus 
\begin{align*}
    f(n+1) = \frac{f(n+1)}{f(n)} f(n) < \frac{g(n+1)}{g(n)} f(n) \leq \frac{g(n+1)}{g(n)} g(n) = g(n+1).
\end{align*}
Inductively, it follows that for all $k \geq 0$, $f(n+k) \leq g(n+k)$. Hence, if $m,n \in \N$ are such that $m \leq n$ and $(f-g)_+(m), (f-g)_+(n) > 0$, then for all $k \in \llbracket m,n \rrbracket$, $(f-g)_+(k)>0$.
\end{proof}

\begin{proof}[Proof of Theorem \ref{extreme}]
By a translation argument, one may assume that $M=0$, thus we work on $\N$ the set of natural numbers. Suppose that $\P_X \in \Conv(\mathcal{P}_{h}^{\gamma}(\llbracket N \rrbracket))$ is an extreme point, and let $f$ be the p.m.f. of $X$ with respect to $\gamma$. Choose $k$ such that $f(k) >0$.  For $\alpha \in \mathbb{R}$ define $g_\alpha(m) =f(k) e^{\alpha(m-k)}/2$.  Since $g_\alpha$ is log-affine, the functions $(f - g_\alpha)_+$ and $f \wedge g_\alpha$ are non-zero log-concave functions by Lemma \ref{log-affine-conc}.

% hence
% %\begin{align}
% %    \int \varphi d \mu = \int  (f-g_\alpha) \varphi d \gamma + \int  (f \wedge g_\alpha) \varphi d\gamma,
% %\end{align}
% \begin{align}\label{expectation}
%     \E[h(X)] = \int  (f-g_\alpha)_+ h d \gamma + \int  (f \wedge g_\alpha) h d\gamma.
% \end{align}

Note that
\begin{align} \label{eq: infinity densities}
    \lim_{\alpha \to +\infty} (f-g_\alpha)_+(m) &= \delta_{k}(m) \frac{f(k)}{2} + \mathbbm{1}_{\llbracket 0, k-1 \rrbracket}(m) f(m),
        \\
            \lim_{\alpha \to -\infty} (f-g_\alpha)_+(m) &= \delta_{k}(m) \frac{f(k)}{2} + \mathbbm{1}_{\llbracket k+1, N \rrbracket}(m) f(m),
\end{align}
while
\begin{align*}
        \lim_{\alpha \to +\infty} (f \wedge g_\alpha)(m) &= \delta_{k}(m) \frac{f(k)}{2} + \mathbbm{1}_{\llbracket k+1, N \rrbracket}(m) f(m),
        \\
            \lim_{\alpha \to -\infty} (f \wedge g_\alpha)(m) &= \delta_{k}(m) \frac{f(k)}{2} + \mathbbm{1}_{\llbracket 0, k-1 \rrbracket}(m) f(m).
\end{align*}
Let us take the above limits as the definitions of $(f-g_{\pm \infty})_+$ and $f \wedge g_{\pm \infty}$. Note also that
\begin{align}\label{equat}
f = (f - g_\alpha)_+ + f \wedge g_\alpha.
\end{align} Define, for $\alpha \in [-\infty, \infty]$, %$\P_{X_i}(\alpha)$
$X_i(\alpha)$, $i \in \{1,2\}$, as random variables with p.m.f. with respect to $\gamma$ given by
$$ d\P_{X_1(\alpha)} = C_1^{-1}(\alpha) (f- g_\alpha)_+ d \gamma, \qquad d\P_{X_2(\alpha)} = C_2^{-1}(\alpha)(f \wedge g_\alpha) d\gamma, $$
where $C_1(\alpha) = \int (f- g_\alpha)_+ d\gamma$ and $C_2(\alpha) = \int (f \wedge g_\alpha) d\gamma$.  Then by \eqref{equat}, $\P_{X}$ can be written as a convex combination of the $\P_{X_i(\alpha)}$,
\begin{align}\label{equat-PX}
    \P_{X} = C_1(\alpha) \P_{X_1(\alpha)} + C_2(\alpha) \P_{X_2(\alpha)}.
\end{align}
Observe from \eqref{eq: infinity densities} that \begin{align} \label{eq: infinity switches the mu}
%\mu_1(\pm \infty) = \mu_2(\mp \infty).
\P_{X_1}(+ \infty) = \P_{X_2}(- \infty), \qquad \P_{X_1}(- \infty) = \P_{X_2}(+ \infty).
\end{align}
Define $\Psi \colon [-\infty, \infty] \to \mathbb{R}$ by
%\[
%    \Psi(\alpha) = \int h d\P_{X_1}(\alpha) - \int h d\P_{X_2}(\alpha).
%\]
\[
    \Psi(\alpha) = \E[h(X_1(\alpha))] - \E[h(X_2(\alpha))].
\]
Note that $\Psi$ is continuous, and $\Psi(-\infty) = - \Psi(\infty)$ by \eqref{eq: infinity switches the mu}.  Thus by the intermediate value theorem, there exists $\alpha^*$ such that $\Psi(\alpha^*) =0$. Since $\E[h(X)] \geq 0$, we deduce from \eqref{equat-PX} that $\P_{X_i(\alpha^*)} \in \mathcal{P}_h^\gamma(\llbracket N \rrbracket)$.

Now, since $\P_X$ is extreme in $\Conv(\mathcal{P}_h^\gamma(\llbracket N \rrbracket))$, we have $\P_{X_1(\alpha^*)} = \P_{X_2(\alpha^*)} = \P_X$, which implies
$$ f = \frac{(f-g_{\alpha^*})_+}{C_1(\alpha^*)} = \frac{f \wedge g_{\alpha^*}}{C_2(\alpha^*)}, $$
and thus $f = C_2^{-1}(\alpha^*) g_{\alpha^*}$. Hence $X$ is log-affine with respect to $\gamma$.
\end{proof}

% \begin{lem}

% For $X \in \mathcal{P}_h^\gamma(\llbracket N \rrbracket)$, there exists $\alpha^* \in [-\infty,\infty]$  such that
% $\mu_i(\alpha) \in \mathcal{P}_\varphi^\gamma(\llbracket N \rrbracket)$.

% \end{lem}

% \begin{proof}
% Define $\Psi: [-\infty, \infty] \to \mathbb{R}$ by 
% \[
%     \Psi(\alpha) = \int \varphi d\mu_1(\alpha) - \int \varphi d\mu_2(\alpha).
% \]
% $\Psi$ is continuous, and $\Psi(-\infty) = - \Psi(\infty)$ by \eqref{eq: infinity switches the mu}.  Thus by the intermediate value theorem, $\Psi(\alpha) =0$ for some $\alpha$, and from this our result follows.
% \end{proof}

\begin{remark}

% Let us also note that on the support of an extreme point $\mu$ the function $\Lambda(x) = \sum_{n=0}^x \varphi(n) \mu_n$ must never switch signs.  For example, if $\Lambda(k) \geq 0$ and $\Lambda(k+1) < 0$, Then define 
% \begin{align}
%     \mu_1^t(\{m\}) &= \frac{\mu(m \cap \llbracket 0 ,k \rrbracket) + t \mu_{k+1} \delta_{k+1}(m)}{\mu( \llbracket 0 , k \rrbracket) + t \mu_{k+1}}
%         \\
%     \mu_2^t(\{m\}) &= \frac{\mu(m \cap \llbracket k+2 ,n \rrbracket) + (1-t) \mu_{k+1} \delta_{k+1}(m)}{\mu( \llbracket k+2 , n \rrbracket) + (1-t) \mu_{k+1}}
% \end{align} 
% Note that $\mu$ must give positive measure to $\llbracket k+2, n \rrbracket$ or else $0 >\Lambda(k+1) = \Lambda(n) = \int \varphi d\mu$ is a contradiction.  Now define $\Psi(t) = \int \varphi d\mu^t_1$.  By the conditions on $\Lambda$, $\Psi(0) \geq 0$ while $\Psi(1) < 0$, thus there exists $t$ such that $\Psi(t) =0$.  From this a splitting of $\mu$ is possible, and would contradict $\mu$ extreme. It also follows that extreme points will never satisfy $\Lambda(k) =0$.

\textbullet \, Note that on the support of an extreme point $\P_X \in \Conv(\mathcal{P}_h^{\gamma}(\llbracket N \rrbracket))$, with p.m.f. $p$, the function $\Lambda(x) = \sum_{n=0}^x h(n) p(n)$ must never switch signs. If $h$ is of constant sign, then this is obvious. Assume $h$ is not of constant sign, and assume without loss of generality that there exists $k \in \llbracket N - 1 \rrbracket$ such that $\Lambda(k) \geq 0$ and $\Lambda(k+1) < 0$, then define for $t \in [0,1]$ and $n \in \llbracket N \rrbracket$, 
\begin{align*}
    p_{1,t}(n) &= \frac{p(n) 1_{\llbracket 0 ,k \rrbracket}(n) + t p(k+1) \delta_{k+1}(n)}{\P_X( \llbracket 0 , k \rrbracket) + t p(k+1)},
        \\
    p_{2,t}(n) &= \frac{p(n) 1_{\llbracket k+2 ,N \rrbracket}(n) + (1-t) p(k+1) \delta_{k+1}(n)}{\P_X( \llbracket k+2 , N \rrbracket) + (1-t) p(k+1)}.
\end{align*} 
Note that $\P_X$ must give positive measure to $\llbracket k+2, N \rrbracket$ or else $0 >\Lambda(k+1) = \Lambda(N) = \E[h(X)]$, which is a contradiction.  Now define $\Psi(t) = \sum_{n=0}^N h(n) p_{1,t}(n)$.  By the conditions on $\Lambda$, $\Psi(0) \geq 0$ while $\Psi(1) < 0$, thus there exists $t^* \in [0,1]$ such that $\Psi(t^*) =0$.  From this we can split $\P_X$ as
$$ \P_X = (1-\lambda) \P_{X_1} + \lambda \P_{X_2}, $$
where $X_1$ has p.m.f. $p_{1,t^*}$, $X_2$ has p.m.f. $p_{2,t^*}$, and $\lambda = \P_X( \llbracket k+2 , N \rrbracket) + (1-t) p(k+1) \in (0,1)$. Since $\P_{X_1}, \P_{X_2} \in \mathcal{P}_h^{\gamma}(\llbracket N \rrbracket)$, this is a contradiction to the extremality of $\P_X$.

\vskip3mm
\textbullet \, Let us also note that a non-Dirac extreme point $\P_X \in \Conv(\mathcal{P}_h^{\gamma}(\llbracket N \rrbracket))$ satisfies
$$ \E[h(X)] = 0. $$
Indeed, denote $\Lambda(x) = \sum_{n=0}^x h(n) p(n)$ for $x \in \llbracket N \rrbracket$, and assume towards a contradiction that $\Lambda(N) = \E[h(X)] > 0$. Denote by $m$ the smallest element in $\llbracket N \rrbracket$ such that $\Lambda(m) > 0$. By the previous remark, $\Lambda \geq 0$, hence for all $x<m$, $\Lambda(x) = 0$. It follows that $\Lambda(m) = p(m) h(m) > 0$, and thus $p(m) > 0$. Now, define for $t \in (0,1)$,
\begin{align*}
    p_{1,t}(n) &= \frac{p(n) 1_{\llbracket 0 , m-1 \rrbracket}(n) + t p(m) \delta_{m}(n)}{\P_X( \llbracket 0 , m-1 \rrbracket) + t p(m)},
        \\
    p_{2,t}(n) &= \frac{p(n) 1_{\llbracket m+1 ,N \rrbracket}(n) + (1-t) p(m) \delta_{m}(n)}{\P_X( \llbracket m+1 , N \rrbracket) + (1-t) p(m)},
\end{align*}
and we can split $\P_X$ for $t$ close enough to $0$.
%For any $t \leq \frac{\Lambda(N)}{p(m)h(m)}$.
% So that, \E[h(p_{1,t})] \geq 0 and \E[h(p_{2,t})] %\geq 0.

\end{remark}

Theorem \ref{extreme} tells us that if we want to maximize a convex function over $\mathcal{P}_h^{\gamma}(\llbracket M,N \rrbracket)$, it is enough to check probability distributions that are log-affine on a segment:

\begin{coro}\label{maximize}

Let $\Phi \colon \mathcal{P}_h^{\gamma}(\llbracket M,N \rrbracket) \to \R$ be a convex function. Then
$$ \sup_{\P_X \in \mathcal{P}_h^{\gamma}(\llbracket M,N \rrbracket)} \Phi(\P_X) \leq \sup_{\P_{X} \in \mathcal{A}_h^{\gamma}(\llbracket M,N \rrbracket)} \Phi(\P_{X}), $$
where $\mathcal{A}_h^{\gamma}(\llbracket M,N \rrbracket)$ is the subset of $\mathcal{P}_h^{\gamma}(\llbracket M,N \rrbracket)$ whose p.m.f. with respect to $\gamma$ is of the form $f(n) = C p^n \mathbbm{1}_{\llbracket k, l \rrbracket}(n)$, for some $C,p > 0$ and $k,l \in \llbracket M,N \rrbracket$.

%$$ \mathcal{A}_h^{\gamma}(\llbracket M,N \rrbracket) = $$ $$ \mathcal{P}_h^{\gamma}(\llbracket M,N \rrbracket) \cap \{ \P_{X} : \exists \, C,p > 0, k,l \in \llbracket M,N \rrbracket, \mbox{ the p.m.f. of $X$ with respect to $\gamma$ is $f(n) = C p^n \mathbbm{1}_{\llbracket k, l \rrbracket}(n)$} \}. $$

\end{coro}

In the next corollary we demonstrate that the identification of extreme points can be used to derive a discrete analogue of the original localization lemma of Lov\'asz and Simonovits \cite{KLS95}. First, introduce the notations
\begin{eqnarray}\label{log-c}
\mathcal{L}(\gamma) & = &
\{ \mu \in \mathcal{P}(\Z), \mu \mbox{ log-concave with respect to } \gamma \}, \\ \label{log-a}
\mathcal{A}(\gamma) & = &
\{ \mu \in \mathcal{P}(\Z), \mu \mbox{ log-affine with respect to } \gamma \}.
\end{eqnarray}

\begin{cor} \label{cor: positivity condition}

For $\mu$ a measure on $\mathbb{Z}$, log-concave with respect to a reference measure $\gamma$, and $f,g \colon \mathbb{Z} \to \mathbb{R}$, belonging to $\ell_1(\mu)$ such that 
\[
    \sum_i f(i) \mu(i) > 0 \hbox{ and } \sum_i g(i) \mu(i) > 0,
\]
 there exists a probability measure $\nu$ with finite support, log-affine with respect to $\gamma$, such that
\begin{align*} \label{eq: affine conclusion}
    \sum_i f(i) \nu(i) > 0 \hbox{ and } \sum_i g(i) \nu(i) > 0.
\end{align*}

\end{cor}

\begin{proof}
   For large enough $M$, 
   \[
         \sum_{i= -M}^M f(i) \tilde{\mu}(i) > 0 \hbox{ and } \sum_{i= -M}^M g(i) \tilde{\mu}(i) > 0,
   \]
 where $\tilde{\mu}$ is the normalized restriction of $\mu$ to $\llbracket - M, M \rrbracket$, explicitly $\tilde{\mu}(A) \coloneqq \frac{\mu(A \cap \llbracket - M, M \rrbracket )}{\mu( \llbracket -M, M \rrbracket)}$.  Taking $h = g - \sum_{i = -M}^{M} g(i) \tilde{\mu}(i)$, one has $\tilde{\mu} \in \mathcal{P}_{h}^\gamma( \llbracket - M, M \rrbracket)$, and by Corollary \ref{maximize} the maximum of $\Phi \colon \mathcal{P}_{h}^\gamma( \llbracket - M, M \rrbracket) \to \mathbb{R}$, defined as
 \[
    \Phi(\sigma)  = \sum_{i = -M}^M f(i) \sigma(i),
 \]
occurs at an extreme point $\nu$. In this case 
\[
    \sum_{i = -M}^M h(i) \nu(i) = \sum_{i= -M}^M g(i) \nu(i) - \sum_{i = -M}^M g(i) \tilde{\mu}(i) \geq 0
\]
so that $\sum_{i = - M}^M g(i) \nu(i) > 0$.  Further 
\[
    \Phi(\nu) \geq \Phi(\tilde{\mu}) >0
\]
implies that $\sum_{i= -M}^M f(i) \nu(i) > 0$ as well.
\end{proof}

\section{Applications} \label{sec: Applications}

In this section, we discuss applications of our techniques in the discrete setting.

\subsection{A Discrete Analogue of the KLS Lemma for Four Functions}

\begin{thm}\label{thm: four function theorem}

Given $f_1,f_2, f_3,f_4$ nonnegative functions, and $\alpha, \beta >0$, then the inequality
% 
% \[
%   \left( \int f_1 d \mu \right)^\alpha  \left( \int f_2 d \mu \right)^\beta \leq  \left( \int f_3 d \mu \right)^\alpha  \left( \int f_4 d \mu \right)^\beta
% \]
% 
\begin{equation}\label{4-funct}
\E[f_1(X)]^\alpha \E[f_2(X)]^\beta \leq  \E[f_3(X)]^\alpha \E[f_4(X)]^\beta
\end{equation}
holds for all $X$ log-concave random variable with respect to $\gamma$ if and only if it holds for all log-affine random variable with respect to $\gamma$ on a segment.

% \[
%       \left( \int f_1 d \nu \right)^\alpha  \left( \int f_2 d \nu \right)^\beta \leq  \left( \int f_3 d \nu \right)^\alpha  \left( \int f_4 d \nu \right)^\beta
% \]
% holds for all $\nu$ log-affine with respect to $\gamma$.

\end{thm}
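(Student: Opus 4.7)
The forward direction is immediate, since log-affine random variables form a sub-class of the log-concave ones. For the converse, I would argue by contradiction using Corollary~\ref{maximize}, the heart of the matter being a reformulation that turns the non-convex product inequality into a pair of linear conditions to which the corollary directly applies.

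Suppose \eqref{4-funct} holds for every log-affine $X$ with respect to $\gamma$, yet fails at some log-concave $X_0$. After truncating $X_0$ to a sufficiently large finite window $\llbracket M, N \rrbracket$ (the restriction of a log-concave sequence to a contiguous interval is again log-concave, and the numbers $F_i := \E[f_i(X_0)]$ depend continuously on the window) and normalizing, I may assume $X_0$ is supported in $\llbracket M, N \rrbracket$ and the violation $F_1^\alpha F_2^\beta > F_3^\alpha F_4^\beta$ persists. Assuming further that all $F_i>0$ (a modest separate argument takes care of the degenerate case in which some $F_i$ vanishes), I have $(F_1/F_3)^\alpha (F_2/F_4)^\beta > 1$, so by continuity I can pick $a, b > 0$ with
\[
    a < F_1/F_3, \qquad b < F_2/F_4, \qquad a^\alpha b^\beta > 1,
\]
and in particular $\E_{X_0}[f_1 - a f_3] > 0$ and $\E_{X_0}[f_2 - b f_4] > 0$.

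Now set $h := f_1 - a f_3$, so that $\P_{X_0} \in \mathcal{P}_h^\gamma(\llbracket M, N \rrbracket)$, and consider the linear (hence convex) functional $\Phi(\nu) := \int (f_2 - b f_4)\, d\nu$ on $\mathcal{P}_h^\gamma(\llbracket M, N \rrbracket)$. Since $\Phi(\P_{X_0}) > 0$, Corollary~\ref{maximize} yields
\[
    0 < \Phi(\P_{X_0}) \leq \sup_{\nu \in \mathcal{P}_h^\gamma(\llbracket M, N \rrbracket)} \Phi(\nu) \leq \sup_{\nu^\# \in \mathcal{A}_h^\gamma(\llbracket M, N \rrbracket)} \Phi(\nu^\#),
\]
so I can select a log-affine $\nu^* \in \mathcal{A}_h^\gamma(\llbracket M, N \rrbracket)$ with $\Phi(\nu^*) > 0$. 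Unwinding the definitions, $\nu^*$ satisfies $\E_{\nu^*}[f_1] \geq a \, \E_{\nu^*}[f_3]$ and $\E_{\nu^*}[f_2] > b \, \E_{\nu^*}[f_4]$. Raising to the $\alpha$ and $\beta$ powers respectively and multiplying produces
\[
    \E_{\nu^*}[f_1]^\alpha \E_{\nu^*}[f_2]^\beta > a^\alpha b^\beta \,\E_{\nu^*}[f_3]^\alpha \E_{\nu^*}[f_4]^\beta > \E_{\nu^*}[f_3]^\alpha \E_{\nu^*}[f_4]^\beta,
\]
contradicting the hypothesis that \eqref{4-funct} holds for every log-affine measure.

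The step I expect to be the main hurdle is the reformulation at the start: the natural function $\mu \mapsto \E_\mu[f_1]^\alpha \E_\mu[f_2]^\beta - \E_\mu[f_3]^\alpha \E_\mu[f_4]^\beta$ is not convex in $\mu$, so Corollary~\ref{maximize} cannot be invoked on it directly. The point is to recognize that a strict product inequality can always be separated into two linear ones via the slackness parameters $a,b$, one of which gets absorbed into the constraint defining $\mathcal{P}_h^\gamma$ and the other of which becomes a linear objective. Once this decoupling is in place, everything else (the truncation step, the preservation of the strict inequality in the passage to the log-affine extremizer, and the treatment of degenerate configurations where some $F_i(\P_{X_0})$ or $F_i(\nu^*)$ vanishes) amounts to routine bookkeeping.
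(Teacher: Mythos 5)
Your proposal follows essentially the same strategy as the paper's proof: both turn the non-convex product inequality into a linear constraint that defines $\mathcal{P}_h^\gamma$ plus a linear objective $\Phi$, and both invoke Corollary~\ref{maximize}. The paper argues directly (it shows $\Phi(\P_X) \leq 0$ via the extreme-point bound), while you argue by contradiction; the two are equivalent in substance.

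There is, however, one gap you gesture at but dismiss as ``routine bookkeeping,'' and it does need a small idea, not just bookkeeping. In your final step you pass from
\[
    \E_{\nu^*}[f_1] \geq a\,\E_{\nu^*}[f_3], \qquad \E_{\nu^*}[f_2] > b\,\E_{\nu^*}[f_4],
\]
to the strict inequality $\E_{\nu^*}[f_1]^\alpha \E_{\nu^*}[f_2]^\beta > \E_{\nu^*}[f_3]^\alpha \E_{\nu^*}[f_4]^\beta$. This multiplication is legitimate only if the non-strict factor is bounded away from zero. If the localization happens to return an extremizer with $\E_{\nu^*}[f_1] = 0$ (which, via your constraint with $a>0$, forces $\E_{\nu^*}[f_3] = 0$ as well), then both sides of the target inequality equal zero and $\nu^*$ satisfies the log-affine hypothesis vacuously; no contradiction results. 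Nothing in the construction --- the constraint $\E_{\nu^*}[f_1 - a f_3] \geq 0$ and the objective $\Phi(\nu^*) = \E_{\nu^*}[f_2 - b f_4] > 0$ --- rules out $\E_{\nu^*}[f_1] = 0$. The paper's device $\tilde f_3 = f_3 + \varepsilon$ is exactly what forecloses this: in your language, replacing the constraint by $h = f_1 - a(f_3 + \varepsilon)$ forces $\E_{\nu^*}[f_1] \geq a\varepsilon > 0$ for every admissible $\nu^*$, after which the chain of strict inequalities goes through and the $\varepsilon$ can be sent to zero at the end. With that one modification your argument is complete.
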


\begin{proof}

One direction is immediate. For the other direction, given $X$ log-concave with respect to $\gamma$, it is enough to prove that $\E[f_1(X)]^\alpha \E[f_2(X)]^\beta \leq  \left( \E[f_3(X)] + \varepsilon \right)^\alpha \E[f_4(X)]^\beta$
    holds for all $\varepsilon> 0$. By an approximation argument, one may assume that $X$ is compactly supported, say on $\llbracket M,N \rrbracket$.  Writing $\tilde{f}_3 = f_3 + \varepsilon$, and
    \begin{align*}
        \Phi(\P_Z) = \left( \frac{ \E[f_1(X)] }{ \E[\tilde{f}_3(X)]} \right)^{\frac{\alpha}{ \beta}} \E[f_2(Z)] -  \E[f_4(Z)],
    \end{align*}
    we wish to show that $\Phi(\P_X) \leq 0$. Defining $h = \E[\tilde{f}_3(X)] f_1 - \E[f_1(X)] \tilde{f}_3$, for every $\P_Y \in \mathcal{P}_h^{\gamma}(\llbracket M,N \rrbracket)$ log-affine with respect to $\gamma$, one has
    \begin{eqnarray*}
     \Phi(\P_Y) & = & \left( \frac{ \E[f_1(X)] }{ \E[\tilde{f}_3(X)]} \right)^{\frac{\alpha}{ \beta}} \E[f_2(Y)] -  \E[f_4(Y)] \\ & \leq & \left( \frac{ \E[f_1(Y)] }{ \E[\tilde{f}_3(Y)]} \right)^{\frac{\alpha}{ \beta}} \E[f_2(Y)] -  \E[f_4(Y)] \\ & \leq & 0,
     \end{eqnarray*}
     where the first inequality comes from the fact that $\E[h(Y)] \geq 0$ and the second inequality from the fact that \eqref{4-funct} holds for all log-affine distribution. Since $\P_X \in \mathcal{P}_h^{\gamma}(\llbracket M,N \rrbracket)$,we deduce by Corollary \ref{maximize} that $\Phi(\P_X) \leq 0$.
\end{proof}

The next result is a consequence of Theorem \ref{thm: four function theorem} and tells us that the class of discrete log-concave distributions with respect to a reference measure is closed under convolution if and only if the convolution of log-affine distributions are log-concave with respect to that reference measure. Recall that for functions $f,g \colon \Z \to \R$, their convolution is defined as
$$ (f * g) (k) = \sum_{n=-\infty}^{+\infty} f(n) g(k-n), \qquad k \in \Z. $$
%$$ \mu * \nu(\{k\}) = f * g (k) = \sum_{n=-\infty}^{+\infty} f(n) g(k-n) =  \sum_{n=-\infty}^{+\infty} \mu(\{n\}) \nu(\{k-n\}), \qquad k \in \Z. $$
For $A,B \subset \mathcal{P}(\Z)$, we will use the notation $A * B = \{\mu * \nu : \mu \in A, \nu \in B\}$, where for $\mu$ with p.m.f. $f$ and $\nu$ with p.m.f. $g$, $(\mu * \nu)(\{k\}) = (f*g)(k)$ for all $k \in \Z$. Recall the notations \eqref{log-c} and \eqref{log-a}.

\begin{cor}\label{cor: logconcave stability under convolution reduced to log-affine case}

%$$ \mathcal{L}(\gamma) = 
%\{ f \colon \Z \to [0,\infty), f \mbox{ log-concave %with respect to } \gamma \}, $$
%$$ \mathcal{A}(\gamma) = 
%\{ f \colon \Z \to [0,\infty), f \mbox{ log-affine %with respect to } \gamma \}. $$
We have $\mathcal{L}(\gamma) * \mathcal{L}(\gamma) \subseteq \mathcal{L}(\gamma)$ if and only if $\mathcal{A}(\gamma)*\mathcal{A}(\gamma) \subseteq \mathcal{L}(\gamma)$.
    
\end{cor}

\begin{proof}
Denote by $q$ the mass function of $\gamma$. Suppose that $\mathcal{A}(\gamma)*\mathcal{A}(\gamma) \subseteq \mathcal{L}(\gamma)$, we will first show that $\mathcal{L}(\gamma)*\mathcal{A}(\gamma) \subseteq \mathcal{L}(\gamma)$.  Given $\mu \in \mathcal{A}(\gamma)$ with p.m.f. $f$ and $\nu \in \mathcal{L}(\gamma)$ with p.m.f. $g$, we wish to show that for a fixed $k$
\begin{align} \label{eq: log-concavity of convolution equation}
    \left(\frac{f*g}{q}\right)^2(k) \geq \frac{f*g}{q}(k+1) \frac{f*g}{q}(k-1).
\end{align}
Define $f_1(x) = f_2(x) = f(k-x)$, $f_3(x) = \frac{q^2(k)}{q(k+1)q(k-1)} f(k+1-x)$, $f_4(x) = f(k-1-x)$ and $\alpha = \beta = 1$, then \eqref{eq: log-concavity of convolution equation} is equivalent to
\begin{align} \label{eq: log-concavity of convolution equation as four function}
\E[f_1(Y)] \E[f_2(Y)] \geq  \E[f_3(Y)] \E[f_4(Y)],
   %\left( \int f_1 dg \right)\left( \int f_2 dg \right) \geq \left( \int f_3 dg \right) \left( \int f_4 dg \right)
\end{align}
and since \eqref{eq: log-concavity of convolution equation} holds whenever $g$ is log-affine with respect to $\gamma$, \eqref{eq: log-concavity of convolution equation as four function} holds whenever $Y$ is log-affine as well. Thus by Theorem \ref{thm: four function theorem}, \eqref{eq: log-concavity of convolution equation as four function} holds for all $Y$ log-concave with respect to $\gamma$, equivalently, \eqref{eq: log-concavity of convolution equation} holds for all $g \in \mathcal{L}(\gamma)$. Thus $f*g \in \mathcal{L}(\gamma)$ if $f, g \in \mathcal{L}(\gamma)$ and at least one of $f$ and $g$ is an element of $\mathcal{A}(\gamma)$.  Repeating the same argument assuming only that $f \in \mathcal{L}(\gamma)$ completes the proof.
\end{proof}

We can thus give a direct simple computational argument of the fact that log-concave sequences are stable under convolution \cite{fekete1912problem}, as well as Walkup's theorem on the stability under convolution of ultra log-concave sequences \cite{Wal76}.

\begin{coro}[Fekete \cite{fekete1912problem}]

For $p$ and $q$ log-concave sequences, $p*q$ is log-concave as well.

\end{coro}

\begin{proof}
By Corollary \ref{cor: logconcave stability under convolution reduced to log-affine case} and homogeneity it suffices to prove the result when $p(n) = p^n \mathbbm{1}_A(n) $ and $q(m) = q^m \mathbbm{1}_B(m)$ where $p, q > 0$ and $A, B$ are integer intervals. One can express $(p*q)^2(k) \geq (p*q)(k+1) \, (p*q)(k-1)$ as
$$ \sum_{n,m} p(n)p(m) q(k-n)q(k-m) \geq \sum_{n,m} p(n) p(m) q(k+1-n) q(k-1-m), $$
which after change of variable $l = m+n$ gives
\begin{align*}
    q^{2k} \sum_{l} &\left( \frac{p}{q} \right)^l \sum_{n} \mathbbm{1}_{A' \cap B'}(n)
    \geq q^{2k} \sum_{l} \left( \frac{p}{q} \right)^l \sum_n \mathbbm{1}_{A' \cap (B'+1)}(n),
\end{align*}
where $A' = A \cap [l -A] $ and $B' = [k - B] \cap [l - (k-B)]$. The result then follows from observing that $A'$ and $B'$ are both intervals symmetric about $l/2$.
\end{proof}

\begin{coro}[Walkup \cite{Wal76}]\label{Walkup}

For $p$ and $q$ ultra log-concave sequences, $p*q$ is ultra log-concave as well.

\end{coro}

\begin{proof}
By Corollary \ref{cor: logconcave stability under convolution reduced to log-affine case} and homogeneity, it suffices to consider
$$ p(n) = \frac{p^n}{n!}1_{[a,b]}(n), \quad q(n) = \frac{q^n}{n!}1_{[c,d]}(n). $$
Their convolution is
$$ (p * q)(k) = \frac{q^n}{n!} h(k), $$
where
$$ \qquad h(k) := \sum_{n = a \vee (k-d)}^{b \wedge (k-c)} \binom{k}{n} R^n, $$
% = \sum_{k \in \Z} p(k)q(n-k) =  \sum_{k \in \Z} c_1c_2 %\frac{p^k}{k!} 1_{[a,b]}(k) \frac{q^{n-k}}{(n-k)!} %1_{[c,d]}(n-k) =
and $R = p/q$. To prove that $p * q$ is ultra log-concave, it is enough to check that $h$
%$$ h(n) = \sum_{k= a \vee (n-d)}^{b \wedge (n-c)} \binom{n}{k} R^k $$
is log-concave in $k$, for all $R>0$, $a,b,c,d \in \N$, with the convention $\binom{k}{n} = 0$ if $n<0$ or $n>k$. By the commutative property of convolution, $p * q = q * p$, the log-concavity of $h$ can be reduced to the following 3 inequalities,
%$$ \left[ \sum_{k=a}^{b} \binom{n}{k} r^k \right]^2 \geq \sum_{k=a}^{b} \binom{n+1}{k} r^k \,\, \sum_{k=a}^{b} \binom{n-1}{k} r^k, $$
\begin{eqnarray}\label{walk-ineq}
\left[ \sum_{n=a}^{b} \binom{k}{n} R^n \right]^2 & \geq & \sum_{n=a}^{b+1} \binom{k+1}{n} R^n \,\, \sum_{n=a}^{b-1} \binom{k-1}{n} R^n, \nonumber \\ \nonumber
\left[ \sum_{n=a}^{b} \binom{k}{n} R^n \right]^2 & \geq & \sum_{n=a+1}^{b} \binom{k+1}{n} R^n \,\, \sum_{n=a-1}^{b} \binom{k-1}{n} R^n, \\
\left[ \sum_{n=a}^{b} \binom{k}{n} R^n \right]^2 & \geq & \sum_{n=a+1}^{b+1} \binom{k+1}{n} R^n \,\, \sum_{n=a-1}^{b-1} \binom{k-1}{n} R^n.
\end{eqnarray}
The above three inequalities can all be established in the same way via the elementary Pascal identity
\begin{equation}\label{pascal}
\binom{k}{n} = \binom{k-1}{n} + \binom{k-1}{n-1}.
\end{equation}
For the reader convenience, we present the argument for the last inequality \eqref{walk-ineq}. Let us denote, for $0 \leq a \leq b \leq k$,
\begin{equation*}
    h_{a,b}(k) = \sum_{n=a}^{b} \binom{k}{n} R^n.
\end{equation*}
Using Pascal identity \eqref{pascal}, note that
\begin{equation}\label{rec}
h_{a,b}(k) = (R+1)h_{a-1,b-1}(k-1) + \binom{k-1}{b} R^b - \binom{k-1}{a-1} R^{a-1}.
\end{equation}
Therefore,
%by \eqref{rec},
\begin{eqnarray*}
& ~ & h_{a,b}(k)^2 - h_{a+1,b+1}(k+1) h_{a-1,b-1}(k-1) \\
 & = & h_{a,b}(k) \left[ (R+1)h_{a-1,b-1}(k-1) + \binom{k-1}{b}R^b - \binom{k-1}{a-1}R^{a-1} \right] \\ & ~ & \hspace*{3cm} - \, h_{a-1,b-1}(k-1) \left[ (R+1)h_{a,b}(k) + \binom{k}{b+1}R^{b+1} - \binom{k}{a}R^{a} \right] \\ & = & h_{a,b}(k) \left[ \binom{k-1}{b}R^b - \binom{k-1}{a-1}R^{a-1} \right] - \, h_{a-1,b-1}(k-1) \left[ \binom{k}{b+1}R^{b+1} - \binom{k}{a}R^{a} \right] \\
 %& = & \sum_{k=a}^{b} \binom{n}{k} \binom{n-1}{b} R^{k+b} - \sum_{k=a}^{b} \binom{n}{k} \binom{n-1}{a-1} R^{k+a-1} - \sum_{k=a-1}^{b-1} \binom{n-1}{k} \binom{n}{b+1} R^{k+b+1} + \sum_{k=a-1}^{b-1} \binom{n-1}{k} \binom{n}{a} R^{k+a} \\
 & = & \sum_{n=a}^{b} \left[ \binom{k-1}{n-1} \binom{k}{a} - \binom{k}{n} \binom{k-1}{a-1} \right] R^{n+a-1} + \sum_{n=a}^{b} \left[ \binom{k}{n} \binom{k-1}{b} - \binom{k-1}{n-1} \binom{k}{b+1} \right] R^{n+b}.
\end{eqnarray*}
Since $a \leq n \leq b$, each term in the summation is non-negative by direct calculation.
\end{proof}

To emphasize the strength of our techniques, we present a simple proof of the following Walkup-type theorem established by Klartag and Lehec in \cite{KL19}. Recall that a function $f \colon \mathbb{Z} \to [0,\infty)$ is unimodal when $m \leq k \leq n$ implies 
\begin{align*}
    f(k) \geq \min \{ f(m), f(n) \}.
\end{align*}

\begin{coro}[Klartag-Lehec \cite{KL19}]\label{KL}

If $\{a_k\}_{k \geq 0}$ is a log-concave sequence then the sequence $\{c_k\}_{k \geq 0}$ defined by
$$ c_k = \sum_{n \geq k} \binom{n}{k}a_n $$
is log-concave as well.

\end{coro}

\begin{proof}
Recall the convention $\binom{n}{k} = 0$ if $k<0$ or $k>n$. We also use the convention that $\binom{n}{k} = 0$ if $n$ is not an integer. By Theorem \ref{thm: four function theorem} and homogeneity, it is enough to consider $a_n = p^n1_{[a,b]}(n)$. In this case, after a change of variable, the inequality $c_k^2 \geq c_{k-1} c_{k+1}$ can be expressed as
\begin{equation*}
\sum_{l \in \N} p^l \left[ \sum_{n =a \vee (l-b)}^{b \wedge (l-a)} \binom{n}{k}\binom{l-n}{k} \right] \geq \sum_{l \in \N} p^l \left[ \sum_{n =a \vee (l-b)}^{b \wedge (l-a)} \binom{n}{k-1}\binom{l-n}{k+1} \right].
\end{equation*}
Fixing $l \geq 0$ and assuming without loss of generality that $a \geq l-b$, it is enough to prove that
$$ F(a) := \sum_{n =a }^{l-a} \left[ \binom{n}{k}\binom{l-n}{k} - \binom{n}{k-1}\binom{l-n}{k+1} \right] \geq 0. $$
%In this case, we need to prove that
%$$ \sum_{n =a \vee (l-b)}^{b \wedge (l-a)} \binom{n}{k}\binom{l-n}{k} \geq \sum_{n =a \vee (l-b)}^{b \wedge (l-a)} \binom{n}{k-1}\binom{l-n}{k+1}. $$
%Assume without loss of generality that $a \geq l-b$, so that we need to prove
%$$ F(a) := \sum_{n =a }^{l-a} \binom{n}{k}\binom{l-n}{k} - \sum_{n =a }^{l-a} \binom{n}{k-1}\binom{l-n}{k+1} \geq 0. $$
Note that $F$ is constant outside the interval $\{k-1, \dots, l/2\}$. We will conclude that $F \geq 0$ by showing that $F$ is unimodal, $F(k-1) = 0$ and $F(l/2) \geq 0$.
%The non-trivial case is $k-1 \leq a \leq l/2$, since $F$ is constant otherwise the right-hand side is 0.
% Note that the above expression is symmetric in $a$ and $b$.
Denote
$$ f(n) = \binom{n}{k}\binom{l-n}{k} - \binom{n}{k-1}\binom{l-n}{k+1}, \qquad g(n) = f(n) + f(l-n). $$
%$$ F(a) = \sum_{n =a }^{l-a} f(n) = \sum_{n=a}^{\frac{l}{2}} g(n) - f(l/2). $$
%with the convention $f(l/2) = 0$ if $l$ is odd.
%We want to prove that $F(a) \geq 0$.
First, note that if $n \geq \frac{l-1}{2}$, then $f(n) \geq 0$.
%$$ f(n) = \binom{n}{k}\binom{l-n}{k} - \binom{n}{k-1}\binom{l-n}{k+1} \geq 0. $$
Hence, $F(l/2) = f(l/2) \geq 0$.
%Thus, it remains to check that 
%$$ H(a) = \sum_{n=a}^{\frac{l}{2} - 1} g(n) \geq 0. $$
On the other hand, applying the Chu-Vandermonde type identity
$$ \sum_{n=0}^l \binom{n}{k}\binom{l-n}{s-k} = \binom{l+1}{s+1}, $$
we have
$$ \sum_{n =k-1 }^{l-k+1} \binom{n}{k}\binom{l-n}{k} = \sum_{n = 0 }^{l} \binom{n}{k}\binom{l-n}{k} = \binom{l+1}{2k+1} = \sum_{n = k-1 }^{l-k+1} \binom{n}{k-1}\binom{l-n}{k+1}. $$
%$$ \sum_{n = k-1 }^{l-k+1} \binom{n}{k-1}\binom{l-n}{k+1} = \sum_{n = 0 }^{l} \binom{n}{k-1}\binom{l-n}{k+1} = \binom{l+1}{2k+1}. $$
Therefore, $F(k-1)=0$. Next, note that
\begin{eqnarray*}
    g(n) \geq 0 & \Longleftrightarrow & 2 \binom{n}{k}\binom{l-n}{k} - \binom{n}{k-1}\binom{l-n}{k+1} - \binom{l-n}{k-1}\binom{n}{k+1} \geq 0 \\ & \Longleftrightarrow & an^2 +bn + c \geq 0,
\end{eqnarray*}
with $a = -2\frac{k+1}{k} - 2 < 0$. Since $g(n) = g(l-n)$, we deduce the existence of $n_0 \leq l/2$ such that $g(n) < 0$ for $n \leq n_0$ and $n \geq l-n_0$, and $g(n) \geq 0$ for $n \in \{n_0+1, ..., l - n_0-1\}$. In fact, $n_0 \geq k-1$ since we have seen that $F(k-1) = 0$ and $F(l/2) \geq 0$. It follows that $F(a)$ increases when $a \in \{k-1, ..., n_0\}$ and $F(a)$ decreases when $a \in \{n_0+1, ..., l/2\}$, therefore $F$ is unimodal.
%Since again $F(k-1) = 0, F(l/2) \geq 0$ (as seen above), we deduce that $F \geq 0$.
\end{proof}

\begin{remark}

The above proof of Corollary \ref{KL} can be adapted to provide yet a second simple proof of Walkup's theorem. The slight difference consists in showing that 
$$ \widetilde{F}(a) := \sum_{n =a }^{l-a} \left[ \binom{k}{n}\binom{k}{l-n} - \binom{k-1}{n}\binom{k+1}{l-n} \right] \geq 0, $$
which can be done with similar considerations via the standard Vandermonde identity.

\end{remark}

%COMPUTATION DETAILS
%Next, note that $g(n) \geq 0$ if and only if
%$$ 2 \binom{n}{k}\binom{l-n}{k} - \binom{n}{k-1}\binom{l-n}{k+1} - \binom{l-n}{k-1}\binom{n}{k+1} \geq 0 % $$
%[Multiplying by $(k-1)! \, (k+1)! \, (n-k+1)! \, (l-n-k-1)!$]
%$$ \Longleftrightarrow \qquad 2 \frac{k+1}{k} \frac{n-k+1}{l-n-k} - 1 - %\frac{(n-k+1)(n-k)}{(l-n-k+1)(l-n-k)} \geq 0 $$
%$$ \Longleftrightarrow \qquad 2\frac{k+1}{k} (n-k+1)(l-n-k+1) - (l-n-k)(l-n-k+1) - (n-k+1)(n-k) \geq 0 %$$
%$$ \Longleftrightarrow \qquad an^2 +bn + c \geq 0, $$
%with $a = -2\frac{k+1}{k} - 2 < 0$. 
% we deduce the existence of $n_0 \leq l/2$ such that $g(n) < 0$ for $n \leq n_0$ and for $n \geq l-n_0$, and $g(n) \geq 0$ for $n \in \{n_0+1, ..., l - n_0-1\}$. Now, if $n_0 < k-1$, then $F(a) \geq 0$ because all the terms $g(n)$ are $\geq 0$. If $n_0 \in \{k-1, ..., l/2\}$, then since $g(n) \geq 0$ iff $n \in \{n_0+1, ..., \frac{l}{2} - 1\}$, $F(a)$ increases when $a \in \{k-1, ..., n_0\}$ and $F(a)$ decreases when $a \in \{n_0+1, ..., l/2\}$. Since $F(l/2) \geq 0$ as seen above, it is enough to check that $F(k-1) \geq 0$. 
%of $n_0 \in \{k-1, ..., l/2\}$ such that

The following inequality due to Gurvits was recently utilized by Havrilla, Nayar, and Tkocz in \cite{havrilla2021khinchin}, where an alternative and elementary proof is also given, to prove that for $X$ a symmetric random variable,
 being strongly log-concave, in the sense that $f_X(t) \coloneqq \mathbb{E} e^{\sqrt{t}X}$ and all of $f_X$'s derivatives are log-concave in $t$, is equivalent to $X$ being ultra sub-Gaussian in the sense of \cite{Nayar2012khinchine}. Theorem \ref{thm: four function theorem} allows an even simpler proof.

\begin{coro}[Gurvits \cite{gurvits2009multivariate}]\label{Gur}
If $(p_n)$ is a log-concave sequence, then the power series $f(t) \coloneqq \sum_{n=0} \frac{p_n}{n!}t^n$ is log-concave in $t \in (0,\infty)$.
\end{coro}

\begin{proof}
By approximation, it suffices to consider $(p_n)$ finite, and to prove that the power series $F \coloneqq - f^2 (\log f )'' =  (f')^2- f f''  \geq 0$. Using Theorem \ref{thm: four function theorem} it suffices to take $p_n = p^n \mathbbm{1}_{A}(n)$, where $A = \llbracket a, b \rrbracket$.  We will prove that $F$ is non-negative by proving that its coefficients are non-negative. Indeed,
 \begin{align} \label{eq: the powerseries expression}
    F(t) = \sum_{l = 0}^\infty t^l \left[\frac{p^{l + 2}}{l!}  \sum_{n \in \mathbb{Z}} \binom{l}{n} \bigg( \mathbbm{1}_A(n+1) 
    \mathbbm{1}_A(l -n + 1) - \mathbbm{1}_A(n) \mathbbm{1}_A(l -n + 2) \bigg) \right].
 \end{align}
We observe that the coefficients  of \eqref{eq: the powerseries expression} are zero unless $m \coloneqq (a-1) \vee (l-(b-1)) \leq \frac l 2$, in which case,
\begin{align*}
   \sum_{n \in \mathbb{Z}} \binom{l}{n} \bigg( \mathbbm{1}_A(n+1) \mathbbm{1}_A(l -n + 1) - \mathbbm{1}_A(n) \mathbbm{1}_A(l -n + 2) \bigg) 
   = \binom{l}{m} - \binom{l}{m-1} \geq 0.
\end{align*}
\end{proof}
% It is 0 when a-1 > l-a + 1, hence when a - 1 > l/2, and also when b-1 > l-b+1, hence when b-1>l/2, equivalently, when l-(b-1)<l/2.
%Indeed, if a-1>l-a+1, then l-n+1 < a-1, and same argument for b-1.
% If not this case, then the sum is between m and l-m+1, and the result is \binom{l}{m} - \binom{l}{l-m+1} = \binom{l}{m} - \binom{l}{m-1} \geq 0.

\subsection{Discrete Pr\'ekopa-Leindler Inequality}\label{Prekopa}

The classical Pr\'ekopa-Leindler inequality (\cite{Pre71}, \cite{Lei72a}) states that if $t \in [0,1]$ and $f,g \colon \R^d \to [0, +\infty)$ are non-negative measurable functions such that
$$ f \square_t g(z) = \sup_{\{(x,y) \in \R^{2d} \, : \, z = (1-t)x + t y\}} f^{1-t}(x) g^t(y) $$
is measurable, then
\begin{align}\label{pl}
    \int_{\R^d} f \square_t g(z) dz \geq \left( \int_{\R^d} f(z) dz \right)^{1-t} \left( \int_{\R^d} g(z) dz \right)^{t}.
\end{align}

The Brunn-Minkowski inequality, fundamental in convex geometry (see, e.g., \cite{Gar02}), is a geometric analogue of the Pr\'ekopa-Leindler inequality and states that if $t \in [0,1]$ and $A,B \subset \R^d$ are measurable sets such that $(1-t)A + t B$ is measurable, then
\begin{align}\label{bm}
|(1-t)A + t B|_d^{\frac{1}{d}} \geq (1-t)|A|_d^{\frac{1}{d}} + t |B|_d^{\frac{1}{d}}.
\end{align}

There has been impressive recent progress on Pr\'ekopa-Leindler and Brunn-Minkowski type inequalities on lattices. %(see, e.g., \cite{KL19, HKS, GRST19, S20}).
For example, Halikias-Klartag-Slomka \cite{HKS} proved that
    \begin{align}\label{KL-prek}
       \left( \sum_x f(x) \right) \left( \sum_x g(x) \right) \leq \left(\sum_x h(x) \right) \left( \sum_x k(x) \right)
    \end{align}
for all functions $f,g,h,k : \mathbb{Z}^d \to [0,\infty)$ and $t \in [0,1]$ such that
    \begin{align} \label{eq: other-pl hypothesis}
        f(x) g(y) \leq h( \lfloor t x + (1-t) y \rfloor ) k( \lceil (1-t) x+ t y \rceil) 
    \end{align}
holds for $x, y \in \mathbb{Z}^d$, generalizing a result of Klartag-Lehec \cite{KL19} (see also \cite{GRST19}, \cite{S20}). Another discrete Pr\'ekopa-Leindler type inequality was derived by Iglesias-Yepes Nicol\'as-Zvavitch in \cite{LNZ20}: Let $t \in (0,1)$ and let $K, L \subset \R^d$ be non-empty bounded sets. Let $f, g, h \colon \R^d \to [0, +\infty)$ be functions such that
$$ h((1-t)x + t y) \geq f(x)^{1-t} g(y)^{t}, $$
for all $x \in K$ and $y \in L$. Then,
\begin{align}\label{other-pl}
    \sum_{z \in M \cap \Z^d} h^{\circ}(z) \geq \left(\sum_{z \in K \cap \Z^d} f(z)\right)^{1-t} \left( \sum_{z \in L \cap \Z^d} g(z) \right)^{t},
\end{align} 
where $M = (1 - t)K + tL + (-1, 1)^n$, and $h^{\circ}(z) = \sup_{s \in (-1,1)^n} h(z+s)$.

%.As pointed out in \cite{HKS}, inequality \eqref{KL-prek} implies that
%$$ G_d \left( \frac{K+L}{2} + (-1,1)^d \right) \geq \sqrt{G_d(K) G_d(L)} $$
%holds for all bounded non-empty sets $K,L \subset \R^d$, where $G_d(M)$ denotes the number of lattice points in $M %\subset \R^d$, hence recovering a weaker form of a result of 

\iffalse
Applying \eqref{KL-prek} when $d=1$ to $f(n) = \mathbbm{1}_A(n) \mu(n)$, $g(n) = \mathbbm{1}_B(n) \mu(n)$, $h(n) = \mathbbm{1}_{\lfloor t A + (1-t) B \rfloor}(n) \mu(n)$, and $k(n) = \mathbbm{1}_{\lceil (1-t) A + t B \rceil}(n) \mu(n)$, where $\mu$ is a log-concave probability measure on $\Z$ and $A, B \subseteq \mathbb{Z}$, one deduces that
    \begin{align}\label{KL-cor}
       \mu(A) \mu(B) \leq \mu(\lfloor t A + (1-t) B \rfloor) \mu( \lceil (1-t) A+ tB \rceil),
    \end{align}
    where 
    \begin{align*}
    \lfloor t A + (1-t) B \rfloor &= \{ z \in \mathbb{Z} : z = \lfloor t a + (1-t) b \rfloor, a \in A, b \in B \},
        \\
     \lceil (1-t) A + t B \rceil &= \{ z \in \mathbb{Z} : z = \lceil (1-t) a + t b \rceil, a \in A, b \in B \}.
    \end{align*}
\fi

In the literature, these inequalities are referred to as Brunn-Minkowski inequalities for the geometric type results that can be deduced. However, a motivation going back to \cite{OV12, GRST14} for developing Pr\'ekopa-Leindler type inequalities in the discrete setting, and conspicuously absent currently, was the potential to derive functional inequalities from these ``geometric inequalities''. In the continuous setting this is done through differentiating the Pr\'ekopa-Leindler inequality at $t = 0$, requiring sharp control of the inequality for $t \to 0$.  Analogously, the Euclidean isoperimetric inequality for example can be derived easily by differentiating the Brunn-Minkowski inequality. Our contribution is somewhat modest in comparison to recent developments, as it only applies to the one-dimensional case, and only to unimodal functions, however it does capture the correct behavior for small $t$ in cases that elude the previously derived inequalities, as we will demonstrate at the end of this section.

\begin{thm} \label{thm: main}
Suppose that $f$ and $g$ are unimodal $\ell^1(\mu)$ functions for $\mu$ log-concave, then
\begin{align}\label{our-pl}
    \int f \square_t g(z) d\mu(z) \geq \left( \int f(z) d\mu(z) \right)^{1-t} \left( \int g(z) d \mu(z) \right)^{t},
\end{align}
where
$$ f \square_t g(z) = \sup_{\{(x,y) \in \Z^2 \, : \, |(1-t)x + t y - z| < 1 \}} f^{1-t}(x) g^t(y). $$
\end{thm}

\begin{proof}[Proof of Theorem \ref{thm: main}]
We will first prove the result in the special case that $f$ and $g$ are indicators.  Since $f$, $g$ are unimodal indicator functions they can be written as $f = \mathbbm{1}_{ \llbracket a_1, a_2 \rrbracket}$ and $g = \mathbbm{1}_{\llbracket b_1 , b_2 \rrbracket}$ for intervals contained in the support of $\mu$.  In this case we can write
\begin{align*}
    f \square_t g(z) = \mathbbm{1}_{ \llbracket L_1,  L_2 \rrbracket }
\end{align*}
with $L_1 = \lf (1-t) a_1 + t b_1 \rf$ and $L_2 = \lc (1-t) a_2 + t b_2 \rc$.  To prove that $
    \int f \square_t g(z) d\mu(z) \geq \left( \int f(z) d\mu(z) \right)^{1-t} \left( \int g(z) d \mu(z) \right)^{t},
$
if one applies Theorem \ref{thm: four function theorem} with $\alpha = 1-t, \beta = t$, $f_1 = f, f_2 = g,$ and $f_3= f_4 = f \square_t g$ then it suffices to prove the result when $\mu = \nu$ is a log-affine measure. By normalizing and translating, we may assume that $a_1, b_1 \geq 0$ and that $\nu(k) = p^k$ for $p \in (0,1]$.  Note that if $p = 1$, the proof is an immediate computation, that can alternatively be recovered from the $p \in (0,1)$ case, thus we further assume $p < 1$. In this case we have
\begin{align*}
    \left( \int f(z) d\nu(z) \right)^{1-t} \left( \int g(z) d \nu(z) \right)^{t}
        &=
            \left(\sum_{j=a_1}^{a_2} p^j \right)^{1-t} \left( \sum_{j=b_1}^{b_2} p^j \right)^{t}
                \\
        &=
            p^{a_1(1-t)+b_1 t} \frac{ (1 - p^{a_2 - a_1 + 1})^{1-t}(1 - p^{b_2 - b_1 + 1})^t}{1-p}
                \\
        &\leq
            p^{a_1(1-t)+b_1 t} \frac{ (1-t) (1 - p^{a_2 - a_1 + 1}) + t(1 - p^{b_2 - b_1 + 1})}{1-p}
                \\
        &\leq
            p^{a_1(1-t)+b_1 t} \frac{ 1 - p^{(1-t)a_2+tb_2 - (1-t)a_1-tb_1 + 1}}{1-p}
                \\
        &\leq
            p^{L_1} \frac{ 1 - p^{L_2 - L_1 + 1}}{1-p}
                \\
        &=
            \int f \square_t g(z) d \nu.
\end{align*}
The first two inequalities are by the arithmetic-geometric means inequality and the third is by monotonicity. \\

Now let us assume that $f$ and $g$ take finitely many values all belonging to the support of $\mu$.  In this case by unimodality $f = \sum_{i=1}^n f_i \mathbbm{1}_{A_i}$ for $f_i > 0$ and $A_i$ intervals such that $A_i \subseteq A_{i-1}$ while $g = \sum_{j=1}^m g_j \mathbbm{1}_{B_j}$ for $g_j >0$ and $B_j$ intervals such that $B_j \subseteq B_{j-1}$.  We proceed by induction, with the case $m+n \leq 2$ complete we may assume that $m + n = k$ and that the desired inequality holds for functions $\tilde{f}$ and $\tilde{g}$ satisfying $\tilde{m} + \tilde{n} < k$. Without loss of generality, we may assume that $n \geq 2$. Define $F_B = \sum_{i=1}^{n-1} f_i \mathbbm{1}_{A_i}$ and $F_T =f_n \mathbbm{1}_{A_n} $, so that $\int F_B(z) d \mu(z) < \int f(z) d\mu(z)$.  Now define $G_B^{(\lambda)}  = \min \{ g, \lambda \}$, and by the intermediate value theorem, since $\int G_B^{(0)}(z) d\mu(z) = 0$ and $\lim_{\lambda \to \infty} \int G_B^{(\lambda)}(z) d\mu(z) = \int g(z) d \mu(z)$ there exists $\lambda_0 \in (0,\infty)$ such that
\begin{align*}
    \int G_B^{(\lambda_0)}(z) d \mu(z) = \frac{\int F_B(z) d \mu(z) } {\int f(z) d\mu(z)} \int g(z) d\mu(z).
\end{align*}
Define $G_B = G_B^{\lambda_0}$ and $G_T = g - G_B$.  We now claim that
\begin{align*}
    f \square_t g \geq F_B \square_t G_B  + F_T \square_t G_T.
\end{align*}
To show this, observe that $\supp(F_T) \subseteq \{ z : F_B(z) = \| F_B \|_\infty \}$ and $\supp(G_T) \subseteq \{ z : G_B(z) = \| G_B \|_\infty \}$, and that if $F_T \square_t G_T(z) = 0$ the result is immediate, as $f \geq F_B$ and $g \geq G_B$ will imply $f \square_t g(z) \geq F_B \square_t G_B(z)$.  Thus if $F_T \square_t G_T(z) >0$ then there exist $x,y$ such that $|(1-t) x + t y - z| < 1$  and $F_T^{1-t}(x) G_T^{1-t}(y) = F_T \square_t G_T(z)$.  Further, $x$ and $y$ belong to the respective supports of $F_T$ and $G_T$, $F_B(x) = \|F_B\|_\infty$ and $G_B(y) = \|G_B\|_\infty$, so that $F_B  \square_t G_B(z)=  F_B^{1-t}(x) G_B^t(y)$.  Computing,
\begin{align*}
   F_B \square_t G_B(z) + F_T \square_t G_T(z)
    &=
        F_B^{1-t}(x) G_B^t(y) + F_T^{1-t}(x) G_T^t(y)
            \\
    &\leq
        (F_B(x)+ F_T(x) )^{1-t} (G_B(y) + G_T(y))^t
            \\
    &=
        f^{1-t}(x) g^t(y)
            \\
    &\leq
        f \square_t g (z).
\end{align*}
Thus
\begin{align*}
    \int f \square_t g (z) dz \geq \int F_B \square_t G_B(z)d\mu(z) + \int F_T \square_t G_T(z) d \mu(z).
\end{align*}
Observe that $F_B, G_B, F_T$ and $G_T$ are all unimodal.  Moreover, since $F_B$ and $F_T$ can both be expressed in terms of a summation of nested indicators with strictly fewer than $n$ terms, and $G_B$ and $G_T$ can both be expressed in terms of a summation of nested indicators with no more than $m$ terms, both integrals satisfy the inductive hypothesis so that
\begin{align*}
    \int F_B \square_t G_B(z)d\mu(z) &+ \int F_T \square_t G_T(z) d \mu(z)
        \\
        &\geq
            \left( \int F_B d \mu \right)^{1-t} \left( \int G_B d \mu \right)^{t} + \left( \int F_T d \mu \right)^{1-t} \left( \int G_T d \mu \right)^{t}
                \\
        &=
            \left( \int f d \mu \right)^{1-t} \left( \int g d \mu \right)^{t}.
\end{align*}

The case of arbitrary unimodal $f,g$ is completed by considering $\tilde{f}$, $\tilde{g}$ unimodal functions taking finitely many values such that  $0 \leq \tilde{f} \leq f$ and $0 \leq \tilde{g} \leq g$, and observing that $f\square_t g \geq \tilde{f} \square_t \tilde{g}$, so that
\[
    \int f \square_t g(z) d\mu(z) \geq \int \tilde{f} \square_t \tilde{g}(z) d\mu(z) \geq \left( \int \tilde{f} d\mu \right)^{1-t} \left( \int \tilde{g} d\mu \right)^t.
\]
Taking the supremum over all such $\tilde{f}$ and $\tilde{g}$ on the right hand side completes the proof.
\end{proof}

Our discrete Pr\'ekopa-Leindler inequality \eqref{our-pl} is obviously the most similar looking form of the classical Prekopa-Leindler inequality \eqref{pl} compared to the other discrete Pr\'ekopa-Leindler type inequalities existing in the literature. Let us also illustrate with an example on how the different Brunn-Minkowski inequalities capture behavior for small $t$: Applying \eqref{KL-prek} when $d=1$ to $f(n) = \mathbbm{1}_A(n) \mu(n)$, $g(n) = \mathbbm{1}_B(n) \mu(n)$, $h(n) = \mathbbm{1}_{\lfloor t A + (1-t) B \rfloor}(n) \mu(n)$, and $k(n) = \mathbbm{1}_{\lceil (1-t) A + t B \rceil}(n) \mu(n)$, where $A, B \subseteq \mathbb{Z}$, $\mu$ is a log-concave probability measure on $\Z$, and
    \begin{align*}
    \lfloor t A + (1-t) B \rfloor &= \{ z \in \mathbb{Z} : z = \lfloor t a + (1-t) b \rfloor, a \in A, b \in B \},
        \\
     \lceil (1-t) A + t B \rceil &= \{ z \in \mathbb{Z} : z = \lceil (1-t) a + t b \rceil, a \in A, b \in B \},
    \end{align*}
    one deduces that
    \begin{align}\label{KL-cor}
       \mu(A) \mu(B) \leq \mu(\lfloor t A + (1-t) B \rfloor) \mu( \lceil (1-t) A+ tB \rceil).
    \end{align}
    Consider $A = \llbracket 0, \infty \llbracket$, $B = \{0\}$, and $\mu(n) = t (1-t)^n$ for $n \geq 0$, inequality \eqref{KL-cor} yields an inequality growing more trivial as $t$ tends to zero,
\begin{align*}
    t \leq 1.
\end{align*}

\iffalse
{\color{red} On the other hand considering  for $f,g,h: \mathbb{R} \to [0,\infty)$ satisfying \eqref{eq: other-pl hypothesis}, defining $\tilde{f}, \tilde{g} :\mathbb{Z} \to [0,\infty)$ by $\tilde{f}(m) = f(m) \mathbbm{1}_K(m)$ and $\tilde{g}(n) = g(n) \mathbbm{1}_L(n)$.  Fixing $z$ and considering $(m,n) \in K \times L$ such that $|(1-t)m + t n - z| < 1$
\begin{align*}
    \tilde{f}^{1-t}(m) \tilde{g}^{t}(n) = f^{1-t}(m) g^t(n) \leq h((1-t) m + t n) \leq \sup_{\{w: |w-z|< 1 \}} h(w) = h^\circ (z).
\end{align*}
Taking the supremum over all such at $(m,n)$ and observing that $\tilde{f} \square_t \tilde{g}(z) =0$ when no such $(m,n)$ exist we have
\[
    \tilde{f}^{1-t} \square_t \tilde{g}^t \leq h^\circ.
\]
If $f$ and $g$ are unimodal on $K \cap \mathbb{Z}$ and $L \cap \mathbb{Z}$ respectively (in particular $K \cap \mathbb{Z}$ and $L \cap \mathbb{Z}$ must be intervals), then applying \eqref{our-pl} in the case that $\mu$ is the counting measure gives
\begin{align*}
    \sum_{z \in M \cap \mathbb{Z}^d} h^\circ 
        &\geq 
            \int \tilde{f} \square_t \tilde{g}(z) d\mu(z)
                \\
        &\geq
            \left( \int \tilde{f} (z) d\mu(z) \right)^{1-t} \left( \int \tilde{g}(z) d \mu(z) \right)^{t}
                \\
        &=
            \left(\sum_{z \in K \cap \Z^d} f(z)\right)^{1-t} \left( \sum_{z \in L \cap \Z^d} g(z) \right)^{t}
\end{align*}
That is when $f$ and $g$ are unimodal on $\mathbb{Z}$ we recover \eqref{other-pl} as the special case that $\mu$ is the counting measure.
}
\fi

Also, the same example with $K = A$ and $L = B$, and $f(z) = g(z) = h(z) = t (1-t)^z$ for $z \geq0$ in \eqref{other-pl}, so that $M \cap \Z = A$, and $h^{\circ}(z) = t(1-t)^{z-1}$ for $z \geq 1$ and $h^{\circ}(0) = t$ , yields
$$ t^t \leq 1 + t, $$
which is a non-optimal inequality as $t$ tends to 0, and is growing more trivial as $t$ tends to 1.

Meanwhile, Theorem \ref{thm: main} yields $\mu^{1-t}(A) \mu^t(B) \leq \mu( \mathcal{M}_t(A,B))$ where $\mathcal{M}_t(A,B) \coloneqq \{ z \in \mathbb{Z} : | z - (1-t) a + t b | < 1, a \in A, b \in B\}$, which in this case is $\llbracket 0 , \infty \llbracket$, and reduces to a stronger inequality, which is moreover sharp for $t$ close to zero or $t$ close to 1:
\begin{align*}
    t^t \leq 1.
\end{align*}

\subsection{Dilation Inequalities}

The aim of this section is to establish a discrete analogue of a result of Nazarov-Sodin-Volberg about dilation inequalities \cite{NSV02} (see also \cite{BN07}, \cite{Fra09}). As an application, we will obtain large and small deviations inequalities for discrete log-concave random variables.

Recall the following definition in the continuous setting (see \cite{BN07}): For a given measurable set $A$ of a convex set $K \subset \R^d$ and $\delta \in (0,1)$,
\begin{align}\label{cont-ad}
\overline{A_{\delta}} = \{z \in A : |A \cap \Delta|_1 \geq (1 - \delta) |\Delta|_1, \, \forall \mbox{ intervals } \Delta \subset K \mbox{ such that } z \in \Delta \}.
\end{align}

%As noted in \cite{?}, if $A$ is the complement in $K$ of a centrally symmetric open interval $B \subset K$, then $A_{\delta} = K \setminus (\frac{2}{\delta} - 1)B$.

%In the discrete setting...

We adapt this definition to the discrete setting. On $\mathbb{Z}$, we consider a set $\Delta$ to be an interval when $z_1 \leq z_2 \leq z_3$ in $\mathbb{Z}$ with $z_1, z_3 \in \Delta$ implies $z_2 \in \Delta$.  For an interval $\Delta$ with a point $z \in \Delta$ we write $\Delta_z = \Delta \setminus \{z\}$.

\begin{defn} \label{defn: delta interior}
    For $A \subset \mathbb{Z}$ contained in an interval $K \subset \mathbb{Z}$, and $\delta \in (0,1)$, define
    \begin{align*}
        A_\delta = \{ z \in A : | A \cap \Delta_z | \geq (1-\delta) |\Delta_z|, \, \forall \mathrm{\, intervals \,\,} \Delta \subset K \mathrm{\, such \, that \,} z \in \Delta\},
    \end{align*}
    where $|\cdot|$ denotes cardinality.
\end{defn}

For $x,y \in \mathbb{Z}$ we denote by $\Delta(x,y) = \, \rrbracket x ,y \rrbracket \cup \llbracket y , x \llbracket$, the interval between $x$ and $y$, with $x$ removed.
% the interval $\llbracket y, x \llbracket$ when $y \leq x$ and the interval $\rrbracket x, y \rrbracket$ when $y \geq x$.
Let us note that in Definition \ref{defn: delta interior}, it suffices to check intervals $\Delta$ of the form $\Delta(z,y)$.  Indeed if $z \in \Delta$ is not an end point, then 
there exist $x$ and $y$ such that $\Delta_z = \Delta(z,x) \cup \Delta(z,y)$ and hence using the result for the restricted class, gives
\begin{align*}
    | A \cap \Delta_z | 
        &=
            | A \cap ( \Delta(z,x) \cup \Delta(z,y) )|
                \\
        &=
            |A \cap \Delta(z,x)| + |A \cap \Delta(z,y)|
                \\
        &\geq
            (1-\delta) ( |\Delta(z,x)| + |\Delta(z,y)| )
                \\
        &=
            (1-\delta) |\Delta_z|.
\end{align*}

\iffalse
As an illustration of Definition \ref{defn: delta interior}, fix $\delta \in (0,1)$ and consider $A = (-\infty, -a \rrbracket \cup \llbracket a, +\infty)$, for some $a \geq 1$. Note that $A_{\delta} = (-\infty, -m \rrbracket \cup \llbracket m, +\infty)$, for some $m \geq a$. In fact, since $\delta \neq 1$, one has $m \geq a+1$. Considering $\Delta = \llbracket -a, m \rrbracket$, we have $|A \cap \Delta_m| = m-a$ and $|\Delta_m| = m + a$. Since the function $m \mapsto F(m) = \frac{|A \cap \Delta_m|}{|\Delta_m|} = \frac{m-a}{m+a}$ is increasing, $\min\{m \geq a+2 : F(m) = 1-\delta \} = a(\frac{2}{\delta} - 1)$ and it remains to choose $m = \lceil a(\frac{2}{\delta} - 1) \rceil$. Hence, $A_{\delta} = (-\infty, -\lceil a(\frac{2}{\delta} - 1) \rceil \rrbracket \cup \llbracket \lceil a(\frac{2}{\delta} - 1) \rceil, +\infty)$.
\fi

As an illustration of Definition \ref{defn: delta interior}, fix $K = \llbracket 0, +\infty \llbracket$ and $\delta \in (0,1)$, and consider $A = \llbracket a, +\infty \llbracket$, for some $a \geq 1$. Let $z \in A$. As noted above, one may only consider intervals of the form $\Delta(z,y)$. 
%and denote $\Delta = \Delta(z,y)$, for $y \in K$. 
Note that if $y \geq a$, then $|A \cap \Delta(z,y)| = |\Delta(z,y)|$, and if $y < a$, then $\frac{|A \cap \Delta(z,y)|}{|\Delta(z,y)|} = \frac{z-a}{z-y}$, which is increasing in $y$. Therefore, one may take $y=0$. In this case, $\frac{|A \cap \Delta(z,y)|}{|\Delta(z,y)|} = \frac{z-a}{z}$, which is increasing in $z$, and $\frac{z-a}{z} \geq 1 - \delta$ if and only if $z \geq \frac{a}{\delta}$. We conclude that $A_{\delta} = \llbracket \lceil \frac{a}{\delta} \rceil, +\infty \llbracket$.

If we fix a compact interval $K \subset \mathbb{Z}$ and consider all log-concave probability sequences supported on $K$, to prove $\mu(A) \geq \mu^{\delta}(A_\delta)$, it suffices by Theorem \ref{thm: four function theorem} applied to $f_1 = 1$, $f_2 = \mathbbm{1}_A$, $f_3 = \mathbbm{1}_{A_\delta}$, and $f_4 = 1$ with $\alpha = \delta$ and $\beta = 1$, to prove the result for log-affine random variables supported on $K$. Note that $A_\delta$ is implicitly dependent on the choice of $K$.  Let $A_\delta(K)$ denote $A_\delta$ defined in Definition \ref{defn: delta interior} with the interval $K$. Notice that if $K \subset K'$ then $A_\delta(K') \subset A_\delta(K)$.  Thus to prove that
\begin{align*}
    \mu(A) \geq \mu^{\delta}(A_\delta)
\end{align*}
holds for all log-concave probability measures $\mu$ with support contained in an interval $K$, it suffices to prove the result for log-affine probability measures $\mu$ such that the support of $\mu$ is exactly $K$.

\begin{thm} \label{thm: geometric dilation inequailty}
    Let $\mu$ be a discrete log-concave probability measure, let $\delta \in (0,1)$, and let $A \subset K$, where $K$ is a (possibly infinite) interval, and $A_\delta$ taken with respect to $K$. Then,
    \begin{align} \label{eq: main inequality}
        \mu(A) \geq \mu^\delta(A_\delta) \mu^{1-\delta}(K).
    \end{align}
    
\end{thm}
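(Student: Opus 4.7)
My plan is to carry out the Four Function reduction to log-affine measures sketched in the paragraph preceding the theorem, and then to prove the inequality in the log-affine case by direct analysis. For an infinite interval $K$ I would approximate by nested finite intervals $K_m \nearrow K$; by the monotonicity $A_\delta(K_m) \supseteq A_\delta(K)$ observed in the paper, together with $\mu(K_m) \nearrow \mu(K)$ and $\mu(A \cap K_m) \nearrow \mu(A)$, the general case follows from the finite case. With $K$ finite, I would apply Theorem \ref{thm: four function theorem} with $f_1 = \mathbbm{1}_{A_\delta}$, $f_2 = \mathbbm{1}_K$, $f_3 = f_4 = \mathbbm{1}_A$, and exponents $\alpha = \delta$, $\beta = 1-\delta$; the resulting four-function inequality is precisely $\mu(A_\delta)^\delta \mu(K)^{1-\delta} \leq \mu(A)$, so it suffices to establish this for log-affine probability measures. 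Using the paper's observation that $A_\delta$ is antitone in the ambient interval, we may further assume the support of $\mu$ is exactly $K = \llbracket 0, N \rrbracket$ with $\mu(n) = Cp^n$ and $\mu(K) = 1$, reducing the goal to $\mu(A) \geq \mu(A_\delta)^\delta$.

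For the log-affine case, I would exploit the density conditions at every $z \in A_\delta$ for all intervals containing $z$. Two key observations: applying the condition to $\Delta = \llbracket 0, z \rrbracket$ and $\Delta = \llbracket z, N \rrbracket$ yields $|A^c \cap \llbracket 0, z-1 \rrbracket| \leq \delta z$ and $|A^c \cap \llbracket z+1, N \rrbracket| \leq \delta(N-z)$ for every $z \in A_\delta$; and applying it to the length-two interval $\llbracket z, z+1 \rrbracket$ (resp.\ $\llbracket z-1, z \rrbracket$) forces $z+1 \in A$ (resp.\ $z-1 \in A$) whenever $z \pm 1 \in K$, since $|A \cap \{z\pm 1\}| \geq 1-\delta > 0$. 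Encoding this via the excess function
\begin{align*}
    \psi(x) = |A \cap \llbracket 0, x \rrbracket| - (1-\delta)(x+1),
\end{align*}
each $z \in A_\delta$ becomes a point at which $\psi$ realizes its running maximum on $\llbracket -1, z-1 \rrbracket$ and, after jumping by $\delta$, its running minimum on $\llbracket z, N \rrbracket$. I would then bound $\mu(A^c)$ from above by maximizing over configurations of $A$ compatible with this monotone-envelope structure, and compare to $\mu(A_\delta)^\delta$ via the closed form $\mu(\llbracket a, b \rrbracket) = C(p^a - p^{b+1})/(1-p)$ (with the obvious $p=1$ modification).

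The main obstacle is the final algebraic verification. Since $A_\delta$ need not be an interval, one cannot in general replace it by $\llbracket \min A_\delta, \max A_\delta \rrbracket$ without losing sharpness; the argument must track $A_\delta$ itself. I expect a further induction on $|K|$, or on $|A_\delta|$, to reduce to a canonical extremal configuration (say, $A$ a prescribed union of a few intervals with $A^c$ packed where $\mu$ is largest), whence the inequality collapses to an explicit estimate on geometric sums $\sum p^n$.
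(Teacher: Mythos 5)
Your reduction to the log-affine case is correct and matches the paper's strategy: normalize so that $\mu(K)=1$ after approximating by finite intervals and conditioning, then invoke Theorem~\ref{thm: four function theorem} to pass to $\mu$ log-affine on $K$. Your specific choice $f_1 = \mathbbm{1}_{A_\delta}$, $f_2 = \mathbbm{1}_K$, $f_3 = f_4 = \mathbbm{1}_A$ with $\alpha=\delta$, $\beta=1-\delta$ does produce the inequality in the right direction, which the paper's preamble actually states with $f_1,f_3$ (or $f_2,f_4$) inadvertently transposed, so this part is sound.

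The gap is the log-affine case itself, which you explicitly leave unfinished (``The main obstacle is the final algebraic verification''). The structural observations you record --- the one-sided counting bounds $|A^c \cap \llbracket 0, z-1\rrbracket| \le \delta z$, the fact that $z \in A_\delta$ forces $z\pm 1 \in A$, and the excess function $\psi$ --- are all true, but they do not on their own yield the inequality $\mu(A) \ge \mu(A_\delta)^\delta$. The difficulty is that both sides depend on $A$, so a brute-force ``minimize $\mu(A)$ subject to the envelope constraints, compare to $\mu(A_\delta)^\delta$'' requires you to quantify simultaneously over $A$ and the induced $A_\delta$, and no canonical extremizer is identified. The paper sidesteps this with three ideas you do not reach: (i) introduce $\Psi(x) = (1-x)^\delta - (1-x)$ and rewrite the target inequality as $\mu(A \cap A_\delta^c) \ge \Psi(\mu(A_\delta^c))$; (ii) decompose $A_\delta^c$ into its maximal interval components $I_i$ and use the subadditivity $\Psi(\sum_i \mu(I_i)) \le \sum_i \Psi(\mu(I_i))$ to reduce to a single interval, so the non-interval structure of $A_\delta$ is absorbed rather than tracked; (iii) for an interior component $I_i = \llbracket a,b\rrbracket$ with $a>0$, prove the stronger linear bound $\mu(A \cap I_i) \ge (1-\delta)\mu(I_i) \ge \Psi(\mu(I_i))$ via Abel summation (using $\mu$ decreasing and the counting bound $|A\cap\llbracket a,x\rrbracket| \ge (1-\delta)|\llbracket a,x\rrbracket|$) plus AM--GM, and for the component touching $0$ deduce the needed estimate from the one-dimensional Brunn--Minkowski inequality for intervals established in Section~\ref{Prekopa}. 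Without an analogue of (ii) your plan cannot avoid the dependence of sharpness on the fine structure of $A_\delta$, and without (iii) or the Brunn--Minkowski input the geometric-sum comparison you gesture at has no clear endpoint. So the high-level reduction is right, but the substance of the theorem --- the log-affine inequality --- is not proved by the proposal.
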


Inequality \eqref{eq: main inequality} has been established for continuous log-concave measures (with the appropriate definition \eqref{cont-ad}) in \cite{NSV02} (see also \cite{BN07}, \cite{Fra09}).

For the proof of Theorem \ref{thm: geometric dilation inequailty}, we will need an auxiliary function $\Psi(x) = (1-x)^\delta - (1-x)$.  Observe that $\Psi$ is concave on $(0,1)$ and non-negative since $\Psi(0) = \Psi(1) = 0$.  Further $\frac{\Psi(x)}{x}$ is non-increasing, hence with $x_1, x_2, x_1 + x_2 \in [0,1]$ and with $x_1 \leq x_2 \leq x_1 + x_2$,
$
    \frac{\Psi(x_1 + x_2)}{x_1 + x_2} \leq \frac{\Psi(x_2)}{x_2} \leq \frac{\Psi(x_1)}{x_1}
$ 
so that $\Psi(x_2 + x_1) \leq \Psi(x_2) + \frac{x_1 \Psi(x_2)}{x_2} \leq \Psi(x_2) + \Psi(x_1)$.  Inductively, for $x_i \in (0,1)$ with $\sum_{i=1}^n x_i \leq 1$,
\begin{align} \label{eq: sub-additivity}
   \Psi \left( \sum_{i=1}^n x_i \right) \leq  \sum_{i=1}^n \Psi(x_i).
\end{align}

\begin{proof}
    Note that by approximation it suffices to consider the case that $\mu$ is supported on a compact set.  Further by restricting $\mu$ to the set $K$, by $\mu\big|_K(B) = \mu(B \cap K)/ \mu(K)$, it suffices to assume that $\mu(K) =1$, and to prove 
    \begin{align}\label{main ineq}
        \mu(A) \geq \mu^\delta(A_\delta).
    \end{align}
    By Theorem \ref{thm: four function theorem}, it suffices to prove the result when $\mu$ is log-affine.  Assume that $A_\delta^c = \cup_{i=0}^n I_i$ where $I_i$ are disjoint intervals separated by at least one point.  For concreteness, assume $\max I_i \leq \min I_{i+1} -2$, and that $\mu$ is supported on $\llbracket 0, m \rrbracket$ with $ \mu (\{ k \}) = \frac{ 1 - p}{1 - p^{m+1 }}  p^k$ with $p \in (0,1)$.   It suffices to prove 
    \begin{align} \label{eq: sufficient inequality}
        \mu( A \cap I_i) \geq \Psi( \mu(I_i))
    \end{align}
    for $0 \leq i \leq n$.  Indeed, subtracting by $\mu(A_\delta)$, \eqref{main ineq} is equivalent to
    \begin{align*}
        \mu(A) - \mu(A_\delta) 
            &\geq \mu^\delta(A_\delta) - \mu(A_\delta) 
                \\
            &= \Psi(\mu(A_\delta^c))
                \\
            &=
                \Psi \left( \sum_{i=0}^n \mu(I_i) \right).
    \end{align*}
    Applying \eqref{eq: sub-additivity}, and assuming $\mu(A \cap I_i) \geq \Psi(\mu(I_i))$ holds for all $i$,
    \begin{align*}
        \mu(A) - \mu(A_\delta)
            &=
                \mu(A \cap A_\delta^c)
                    \\
            &=
                \sum_{i=0}^n \mu( A \cap I_i)
                    \\
            &\geq
                \sum_{i=0}^n \Psi( \mu(I_i))
                    \\
            &\geq
                \Psi \left( \sum_{i=0}^n \mu(I_i) \right)
                    \\
            &=
                \Psi( \mu(A_\delta^c) ).
    \end{align*}
    To prove \eqref{eq: sufficient inequality}, first consider $I_i = \llbracket a, b\rrbracket$ with $a >0$.  In this case, $a - 1 \in A_\delta$ so that by the definition of $A_\delta$, for all $x \geq a$, $|A  \cap \llbracket a, x \rrbracket | \geq (1-\delta) |\llbracket a, x \rrbracket |.$
    Recall the summation by parts formula,
    \begin{align*}
        \sum_{k = 0}^N f_k g_k = f_N \sum_{k=0}^N g_k + \sum_{j = 0}^{N-1} (f_{j} - f_{j+1}) \sum_{k = 0}^j g_k,
    \end{align*}
    which we apply with $f_k = \mu (\{a + k \}) $, $g_k = \mathbbm{1}_{A}(a + k)$, and $N = b-a$,
    \begin{align*}
        \mu( A \cap I_i) 
            &=
                \sum_{k=0}^{b-a} \mu (\{ a+k \}) \mathbbm{1}_A(a + k)
                    \\
            &=
                \mu (\{b \})\sum_{k=0}^{b-a} \mathbbm{1}_A(a + k) + \sum_{j = 0}^{b-a-1} (\mu (\{a+j \}) - \mu (\{a+j+1\})) \sum_{k = 0}^j \mathbbm{1}_{A}( a + k)
                    \\
            &=
              \mu (\{b \})| A \cap \llbracket a , b \rrbracket | + \sum_{j=0}^{b-a-1} (\mu (\{a+j \}) - \mu (\{a+j+1\})) | A \cap \llbracket a , a+j \rrbracket |.
    \end{align*}
    By $|A  \cap \llbracket a, x \rrbracket | \geq (1-\delta) |\llbracket a, x \rrbracket |$, $\mu (\{a+j \}) - \mu (\{a+j+1\}) \geq 0$, and an application of summation by parts again with the constant function $1$ replacing $g_k = \mathbbm{1}_{A}(a + k )$,
    \begin{align*}
        \mu (\{b \}) | A \cap \llbracket a , b \rrbracket | &+ \sum_{j=0}^{b-a-1} (\mu (\{a+j \}) - \mu (\{a+j+1\}))| A \cap \llbracket a , a+j \rrbracket | \\
            &\geq
            (1-\delta) \left( \mu (\{b \}) | \llbracket a , b \rrbracket | + \sum_{j=0}^{b-a-1} (\mu (\{a+j \}) - \mu (\{a+j+1\})) | \llbracket a , a+j \rrbracket | \right)
                \\
            &=
            (1-\delta) \mu(\llbracket a, b \rrbracket).
    \end{align*}
    Thus $ \mu( A \cap I_i ) \geq (1-\delta) \mu(I_i) \geq \Psi(\mu( I_i))$, where the second inequality follows from the arithmetic-geometric means inequality,
    $$ \Psi(\mu( I_i)) = ( 1- \mu(I_i) )^\delta  - (1 - \mu(I_i) \leq \delta ( 1 - \mu(I_i)) + (1-\delta) - ( 1 - \mu(I_i)) = (1-\delta) \mu(I_i). $$

    Now suppose $I_i = \llbracket 0 , b -1 \rrbracket$. Then the inequality we pursue is
    \begin{align*}
        \mu( A \cap I_i) \geq \mu^\delta(\llbracket b , m \rrbracket) - \mu(\llbracket b , m \rrbracket).
    \end{align*}
    However, since $|A \cap I_i | \geq (1-\delta) |I_i| = (1-\delta) b$, and $\mu$ is decreasing,
    %\footnote{We use the notation $\floor*{ x }$ denotes the largest integer bounded above by $x$}
    we have $\mu(A \cap I_i) \geq \mu( \llbracket \floor*{\delta b}, b-1 \rrbracket)$. Rearranging, it suffices to prove
    \begin{align*}
        \mu(\llbracket \floor*{ \delta b }, m \rrbracket) \geq \mu^\delta( \llbracket b, m \rrbracket).
    \end{align*}
    However the above is equivalent to 
    \begin{align*}
        \mu(\llbracket \floor*{ (1-\delta) 0 + \delta b }, m \rrbracket) \geq \mu^\delta( \llbracket b, m \rrbracket) \mu^{1-\delta}(\llbracket 0 ,m \rrbracket ),
    \end{align*}
    which follows from Theorem \ref{thm: main} applied to indicator functions of intervals.
\end{proof}

\subsection{Large and Small Deviations Inequalities}

In this section, we develop large and small deviations inequalities for discrete log-concave random variables.

Recall the following definition in the continuous setting (see \cite{BN07}): For a given measurable function $f$ on $\R^d$ and $\varepsilon \in (0,1)$,
$$ \overline{\delta_f(\varepsilon)} = \sup_{x,y \in \R^d} |\{t \in (0,1) : |f((1-t)x + ty)| \leq \varepsilon |f(x)| \}|_1. $$

We adapt this definition to the discrete setting.

\begin{defn}\label{modus}%[Modulus of Regularity]
    For a (possibly infinite) interval $K \subset \mathbb{Z}$ and $\varepsilon \in (0,1)$, define the modulus of regularity $\delta_f(\varepsilon)$ to a function $f \colon K \to \mathbb{R} $ by
    \begin{align*}
        \delta_f(\varepsilon) = \sup_{\substack{x \neq y \\ x,y \, \in \, K}} \frac{ |\{ z \in \Delta(x,y) : |f(z)| \leq \varepsilon |f(x)| \} |}{|\Delta(x,y)|}
    \end{align*}
\end{defn}

The next proposition illustrates Definition \ref{modus}.

\begin{prop} \label{prop: f(x) = x modulus}
    For $ t > 1$, $f(x) = x$, and $K = \llbracket m, \infty \llbracket$ for $m \geq 1$,
    \begin{align*}
        \delta_f(1/t) \leq  \Psi_f(t) \coloneqq \begin{cases}
                                \frac 1 t &\hbox{ for } t \geq \frac{m}{m-1}
                                    \\
                                \frac{1}{(t-1)m} &\mbox{ for } t \in [ \frac{m+1}{m}, \frac{m}{m-1}]
                                    \\
                                1 &\mbox{ for } t \in (1,\frac{m+1}{m}]
        \end{cases}.
    \end{align*}
    In particular, $\delta_f(1/t) \leq \frac{2}{t}$.
\end{prop}

\begin{proof}
Since $\delta_f \leq 1$ by definition, the inequality is trivial for $t \leq \frac{m+1}{m}$ and we can assume that $t > \frac{m+1}{m}$.  Note that
$$ \left\{ z \in \Delta(x,y) : |f(z)| \leq \varepsilon |f(x)| \right\} = \left\{ z \in \Delta(x,y) : z \leq \frac{x}{t} \right\}, $$
is empty if $x \leq y$ or $x < t m$. Assume thereafter that $y<x$ and $tm \leq x$. In this case,
$$ \left\{ z \in \Delta(x,y) : z \leq \frac{x}{t} \right\} = \left\llbracket y , \left\lfloor \frac{x}{t} \right\rfloor \right\rrbracket. $$
Hence,
$$ \delta_f(1/t) = \sup_{x,y} \frac{\lfloor \frac{x}{t} \rfloor - y + 1}{x-y} = \sup_{ tm \leq x} \frac{\lfloor \frac{x}{t} \rfloor - m + 1}{x-m}  \leq \sup_{tm \leq x} \frac{ \frac{x}{t}  - m + 1}{x-m} $$
as $\frac{\lfloor \frac{x}{t} \rfloor - y + 1}{x-y}$ is decreasing in $y$ since $\lfloor \frac{x}{t} \rfloor  + 1 \leq x$.

Denote $u(x) = \frac{ \frac{x}{t} - m + 1}{x-m}$%= \frac{1}{t} \frac{x-t(m-1)}{x-m}$
, and observe that $u'(x) = \frac{t(m-1) -m}{t(x-m)^2} \geq 0$ when $t \geq \frac{m}{m-1}$ and hence $\sup u = \lim_{x \to \infty} u(x) = \frac 1 t$ in this case. Meanwhile $ t \leq \frac{m}{m-1}$ implies $u'(x) \leq 0$ so that $\sup u = u( tm) = \frac{1}{(t-1)m}$. This gives $\delta_f(1/t) \leq \Psi_f(t)$.
% $y=1$, then $u(x) = \frac{1}{t} \frac{x}{x-1}$ and therefore $\delta_f(1/t) \leq \frac{2}{t}$. Assume then that $y \geq 2$. Note that $u'(x) = \frac{1}{t} \frac{t(y-1) - y}{(x-y)^2}$, hence $u$ is non-decreasing if $t \geq \frac{y}{y-1}$ and $u$ is non-increasing if $t \leq \frac{y}{y-1}$. If $t \geq \frac{y}{y-1}$, then $u(x) \leq u(+\infty) = \frac{1}{t}$ and if $t \leq \frac{y}{y-1}$, then $u(x) \leq u(y+1) = \frac{1}{t} (1+t - y(t-1)) \leq \frac{1}{t}(1+t-2(t-1)) \leq \frac{2}{t}$. 
%To see that 
%$ \delta_f(1/t) \leq \frac{2}{t},$ observe that the inequality is only non-trivial for $t < 1/2$, and that in this case, by the above, it suffices to prove $\frac{1}{(t-1)m} \leq \frac{1}{(t-1)} \leq \frac 2 t$ for $t < 1/2$ which is obvious.
\end{proof}

\begin{thm} \label{thm: functional dilation theorem}
    Let $\mu$ be a discrete log-concave probability measure supported on $K \subset \Z$. For all $\varepsilon \in (0,1), \lambda >0$, and $f \colon K \to \R$ with modulus of regularity $\delta = \delta_f(\varepsilon)$, we have 
    \begin{align*}
        \mu (\{ |f| > \lambda \varepsilon \}) \geq \mu^\delta (\{ |f| \geq \lambda \}).
    \end{align*}
\end{thm}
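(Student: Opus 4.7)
The plan is to reduce the functional inequality to the geometric dilation inequality (Theorem \ref{thm: geometric dilation inequailty}) applied to the sublevel and superlevel sets of $f$. Concretely, set $A = \{|f| > \lambda \varepsilon\}$ and $B = \{|f| \geq \lambda\}$; since $\varepsilon \in (0,1)$ we have $B \subseteq A \subseteq K$. Because $\mu$ is a probability measure on $K$, Theorem \ref{thm: geometric dilation inequailty} gives
\begin{align*}
    \mu(A) \geq \mu^\delta(A_\delta)\mu^{1-\delta}(K) = \mu^\delta(A_\delta),
\end{align*}
so the theorem reduces to the set-theoretic inclusion $B \subseteq A_\delta$, from which $\mu^\delta(A_\delta) \geq \mu^\delta(B)$.

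To establish $B \subseteq A_\delta$, fix $z \in K$ with $|f(z)| \geq \lambda$. The first required property, $z \in A$, is immediate from $\varepsilon < 1$. For the second, I must check that for every interval $\Delta \subset K$ with $z \in \Delta$,
\begin{align*}
    |A \cap \Delta_z| \geq (1-\delta)|\Delta_z|, \quad \text{equivalently}, \quad |A^c \cap \Delta_z| \leq \delta |\Delta_z|.
\end{align*}
By the remark following Definition \ref{defn: delta interior}, it suffices to verify this for intervals of the form $\Delta_z = \Delta(z,y)$. Since $|f(z)| \geq \lambda$, the inequality $|f(w)| \leq \lambda\varepsilon$ implies $|f(w)| \leq \varepsilon|f(z)|$, so
\begin{align*}
    |A^c \cap \Delta(z,y)| \leq |\{w \in \Delta(z,y) : |f(w)| \leq \varepsilon|f(z)|\}| \leq \delta_f(\varepsilon)|\Delta(z,y)| = \delta|\Delta(z,y)|,
\end{align*}
by the very definition of the modulus of regularity applied to the pair $(z,y)$.

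Combining the one-sided bounds through the decomposition $\Delta_z = \Delta(z,x_1) \sqcup \Delta(z,x_2)$ when $z$ is interior (or using just one piece when $z$ is an endpoint) yields $|A^c \cap \Delta_z| \leq \delta|\Delta_z|$, and hence $z \in A_\delta$. Thus $B \subseteq A_\delta$, and chaining with the dilation inequality gives the claim. The only slightly subtle step is keeping track of the fact that $\Delta(z,y)$ is defined so as to exclude $z$, which ensures that the modulus-of-regularity bound applies directly to subsets of $\Delta_z$; no other technicalities arise beyond the geometric inequality itself, which is the substantive input.
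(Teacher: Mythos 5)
Your proof is correct and takes essentially the same approach as the paper: defining $A = \{|f| > \lambda\varepsilon\}$, using the definition of the modulus of regularity together with $|f(z)| \geq \lambda \Rightarrow \{|f| \leq \lambda\varepsilon\} \subseteq \{|f| \leq \varepsilon|f(z)|\}$ to show that $\{|f| \geq \lambda\} \subseteq A_\delta$, and then invoking Theorem \ref{thm: geometric dilation inequailty}. Your write-up is slightly more explicit than the paper's about the reduction to one-sided intervals $\Delta(z,y)$ and the decomposition when $z$ is interior, but the substance is identical.
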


\begin{proof}
    Define $A = \{ w : |f(w)| > \lambda \varepsilon \}$, and consider $x$ such that $f(x) \geq \lambda$. By the definition of the modulus of regularity, for any $y \in K$,
    \begin{align} \label{eq: equation to be rearranged}
       | \{ z \in \Delta(x,y) : |f(z)| \leq \varepsilon \lambda \} |
        \leq
        | \{ z \in \Delta(x,y) : |f(z)| \leq \varepsilon |f(x)| \} | 
        \leq \delta | \Delta(x,y) |.
    \end{align}
Since $| \{ z \in \Delta(x,y) : |f(z)| \leq \varepsilon \lambda \} | + | \{ z \in \Delta(x,y) : |f(z)| > \varepsilon \lambda \} |  = |\Delta(x,y)|$, rearranging \eqref{eq: equation to be rearranged} gives
\begin{align*}
    | \{ z \in \Delta(x,y) : |f(z)| > \varepsilon \lambda \} | \geq (1-\delta) |\Delta(x,y)|.
\end{align*}
Therefore $\{ x : |f(x)| \geq \lambda \} \subseteq A_\delta$. Thus applying Theorem \ref{thm: geometric dilation inequailty} we have
\begin{align*}
    \mu (\{ |f| > \lambda \varepsilon \})
        = \mu(A)
        \geq
            \mu^\delta(A_\delta)
        \geq
            \mu^\delta (\{ |f| \geq \lambda \}),
\end{align*}
which is our conclusion.
\end{proof}

We note that Theorem \ref{thm: functional dilation theorem} implies Theorem \ref{thm: geometric dilation inequailty} as can be seen by taking $f$ taking no more than three values, so the two theorems are in fact equivalent.

For $q \in (0,1)$, we consider $m_q$ to be a $q$-quantile of $|f|$ with respect to a measure $\mu$ when $\mu (\{ |f| > m_q \}) \leq q \leq \mu ( \{ |f| \geq m_q \})$.

\begin{cor}\label{large deviation}
    For a log-concave probability measure $\mu$ and $|f|$ with $q$-quantile $m_q$,
    \begin{align*}
        \mu \{ |f| \geq m_q t \} \leq q^{1/{\delta_f(1/t)}}
    \end{align*}
    holds for all $t > 1$.
\end{cor}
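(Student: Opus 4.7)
The plan is to apply Theorem \ref{thm: functional dilation theorem} with the parameters $\lambda = mt$ and $\varepsilon = 1/t$. With this choice the product $\lambda\varepsilon = m$, so the conclusion of that theorem reads
\[
    \mu(\{|f| > m\}) \geq \mu^{\delta}(\{|f| \geq mt\}),
\]
where $\delta = \delta_f(1/t)$.

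Next I would invoke the definition of the median: since $\mu(\{|f| \leq m\}) \geq 1/2$, we have $\mu(\{|f| > m\}) \leq 1/2$. Combining this with the displayed inequality yields $\mu^{\delta}(\{|f| \geq mt\}) \leq 1/2$, and raising both sides to the power $1/\delta$ gives the claimed bound $\mu(\{|f| \geq mt\}) \leq 2^{-1/\delta_f(1/t)}$.

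There is essentially no obstacle here: the entire argument is a one-line application of Theorem \ref{thm: functional dilation theorem} followed by the trivial estimate of $\mu(\{|f| > m\})$ by $1/2$. The only point worth double-checking is the direction of the median inequality (one needs the strict inequality $|f| > m$ rather than $|f| \geq m$ on the left, which is exactly what the theorem provides), and the hypothesis $t > 1$ ensures that $\varepsilon = 1/t$ lies in $(0,1)$ so that $\delta_f(1/t)$ is well-defined and the theorem applies.
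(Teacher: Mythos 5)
Your proof is correct and follows exactly the paper's own argument: apply Theorem \ref{thm: functional dilation theorem} with $\lambda = mt$, $\varepsilon = 1/t$, and then use the median bound $\mu(\{|f| > m\}) \leq 1/2$. Your remark that $t>1$ guarantees $\varepsilon \in (0,1)$ is a sensible sanity check but otherwise the two arguments are identical.
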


\begin{proof}
Taking $t > 1$, and applying Theorem \ref{thm: functional dilation theorem} with $\lambda =m_q t$ and $\varepsilon = \frac 1 t$, we have
\begin{align*}
    q \geq \mu (\{ |f| > m_q \})
        =
            \mu \{ |f| > \lambda \varepsilon \}
        \geq
            \mu^{\delta_f(\varepsilon)} (\{ |f| \geq \lambda \})
        =
            \mu^{\delta_f(1/t)} (\{ |f| \geq m_q t \}).
\end{align*}
\end{proof}

\begin{cor}\label{small deviation}
    For a log-concave probability measure $\mu$ and $|f|$ with $q$-quantile $m_q$,
    %For $\varepsilon \in (0,1)$, and $m_q$ a $q$-quantile for $|f|$ under $\mu$, then
    \begin{align*}
        \mu (\{ |f| \leq m_q \varepsilon \}) \leq 1 - q^{\delta_f(\varepsilon)} \leq \delta_f(\varepsilon) \log(1/q),
    \end{align*}
    holds for all $\varepsilon \in (0,1)$.
\end{cor}

\begin{proof}
    Applying Theorem \ref{thm: functional dilation theorem} with $\lambda = m_q$, gives 
    \begin{align*}
        \mu (\{ |f| > m_q \varepsilon \})
            \geq
                \mu^{\delta_f(\varepsilon)} (\{ |f| \geq m_q \})
            \geq
                q^{\delta_f(\varepsilon)}.
    \end{align*}
    %Rearranging gives,
    %\begin{align}
    %    \mu (\{ |f| \leq m_q \varepsilon \}) \leq %1 - q^{\delta_f(\varepsilon)}.
    %\end{align}
    The second inequality is a consequences of $1 - e^{-y} \leq y$ applied to $y = \delta_f(\varepsilon) \log \frac 1 q$.
\end{proof}

% Note that for $s > 0$, $\delta_{f^s}(\varepsilon) = \delta_{f}(\varepsilon^{\frac 1 s})$, so that $f(x) = x^s$ for $m \geq 1$ implies $\delta_f(\varepsilon) \leq 2 \varepsilon^{\frac 1 s}$.

Corollaries \ref{large deviation} and \ref{small deviation}, with $f(x) = x$ and Proposition \ref{prop: f(x) = x modulus} can be summarized in probabilistic notation as the following large and small deviation inequalities. In what follows $X$ is a random variable with a log-concave distribution, and with $f(x) = x$, so that $m_q$ the $q$-th quantiles of $f$, are characterized by satisfying $\mathbb{P}(X > m_q) \leq q \leq \mathbb{P}(X \geq m_q)$. 

\begin{cor}\label{deviations}

Let $X$ be a discrete log-concave random variable supported on $\N \setminus \{0\}$. Then, for all $t > 1$ and $\varepsilon \in (0,1)$,
$$ \P(X \geq m_q \, t) \leq  q^{\frac t 2}, \qquad \P(X \leq m_q \, \varepsilon) \leq  1- q^{2 \varepsilon} \leq \ 2\log(1/q) \varepsilon. 
$$
\end{cor}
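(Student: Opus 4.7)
The plan is to apply Corollaries \ref{large deviation} and \ref{small deviation} directly to the identity function $f(x) = x$ on $K = \llbracket 1, \infty \llbracket$, using the bound on its modulus of regularity that was just computed in the paragraph preceding the statement, namely $\delta_f(1/t) \leq 2/t$ for all $t>1$.

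For the large deviation bound, I would plug $f(x) = x$ into Corollary \ref{large deviation}, which gives
\begin{align*}
\P(X > \mathrm{Med}(X)\, t) \leq 2^{-1/\delta_f(1/t)}.
\end{align*}
Since $\delta_f(1/t) \leq 2/t$, one has $1/\delta_f(1/t) \geq t/2$, and hence $2^{-1/\delta_f(1/t)} \leq 2^{-t/2} = e^{-t\log(2)/2}$, which is the first inequality.

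For the small deviation bound, I would plug $f(x) = x$ into Corollary \ref{small deviation}, which gives
\begin{align*}
\P(X \leq \mathrm{Med}(X)\,\varepsilon) \leq \delta_f(\varepsilon)\log(2).
\end{align*}
Writing $\varepsilon = 1/t$, the same computation yields $\delta_f(\varepsilon) \leq 2\varepsilon$, so the right-hand side is bounded by $2\varepsilon\log(2)$, as claimed.

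There is essentially no obstacle here: the corollary is a direct specialization of the functional dilation machinery, the only nontrivial input being the estimate $\delta_f(1/t) \leq 2/t$ that was already carried out in the paragraph just above the statement. The one care needed is to note that $X$ is supported on $\N\setminus\{0\}$, so the interval $K = \llbracket 1,\infty\llbracket$ used for that modulus-of-regularity computation is the correct choice, and the median is well defined and positive.
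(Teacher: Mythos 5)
Your proposal is correct and follows precisely the route the paper takes: it states the corollary immediately after deriving the estimate $\delta_f(1/t) \leq 2/t$ for $f(x)=x$ on $\llbracket 1,\infty\llbracket$, and invokes Corollaries \ref{large deviation} and \ref{small deviation} exactly as you do. The only detail worth noting—which you handle implicitly—is that Corollary \ref{large deviation} bounds $\P(X \geq \mathrm{Med}(X)\,t)$, which dominates the stated $\P(X > \mathrm{Med}(X)\,t)$.
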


Let us observe that the inequality is sharp in the following case. Let $q \in (0,1)$ and consider $X$ such that $1 -\mathbb{P}(X = 1) = q = \mathbb{P}(X = 2)$. One can take $m_q = 1$ and thus for $t \in (1,2)$, 
\[
q^{\frac t 2} -\mathbb{P}(Y \geq m_q t) = q^{\frac t 2}- q. \]
Taking $t \to 2$ shows the inequality to be sharp.
% that the constant $ c= \frac 1 2$ in the large deviation inequality is optimal by considering $\mathbb{P}(X = 1) = \mathbb{P}(X = 2) = \frac 1 2$ with the median $\mathrm{Med}(X) \coloneqq m_{1/2} = 1$, then $\mathbb{P}(X > \mathrm{Med}(X) t) \leq  (1/2)^{ c t}$ implies $c \leq \frac{1}{2}$ by taking the limit with $t$ increasing to $\frac 1 2$.
 For the small deviation inequality, one can consider the same random variable $X$ as above, so that one can take $m_q = 2$, and then for $\varepsilon \in (1/2,1)$, 
\[
    1 - q^{2 \varepsilon} - \mathbb{P}(Y \leq m_q \varepsilon) = q - q^{2\varepsilon}.
\]
Taking $\varepsilon \to 1/2$ shows the inequality to be sharp. We note that the constant $2$ may not be optimal for a linear bound in the second inequality, but cannot be improved beyond a factor of $2$ as can be checked with a geometric distribution.

One can deduce large deviation inequalities for discrete log-concave random variable supported on $\N$.

\begin{cor}

Let $X$ be a discrete log-concave random variable supported on $\N$. Then, for all $u > m_q$,
\begin{eqnarray} \label{eq: large deviation with 0 in there}
\P(X \geq u) \leq e^{-(u+1)\frac{\log(1/q)}{2(1 + m_q)}}.
\end{eqnarray}
In particular, taking $q = \frac{1}{2}$, we have that for all $u > \mathrm{Med}(X)$,
\begin{eqnarray*}%\label{large-deviation-median}
\P(X \geq u) \leq e^{-(u+1)\frac{\log(2)}{2(1+\mathrm{Med}(X))}},
\end{eqnarray*}
where $Med(X)$ denotes a median of $X$.

\end{cor}

\begin{proof}
Define $Y = X+1$ so that $Y$ is discrete log-concave on $\N \setminus \{0\}$. Note that $m_q(Y)$, the $q$-quantile of $Y$, satisfies $m_q(Y) = m_q + 1$. Then, by Corollary \ref{deviations}, for all $s > m_q + 1$,
\begin{eqnarray*}
\P(X \geq s - 1) = \P(Y \geq s) & \leq & e^{-s\frac{\log(1/q)}{2 m_q(Y)}},
\end{eqnarray*}
and the result follows.
\end{proof}

Observe that equality holds in \eqref{eq: large deviation with 0 in there} when $X$ is a random variable with a Bernoulli distribution of parameter $q$, $m_q = 0$, and $u = 1$.

\vskip5mm
\noindent
{\bf Acknowledgements.}

The authors would like to thank the referees whose valuable comments greatly improved the presentation of the article.

\vskip5mm
\noindent
{\bf Data availability.}

Data sharing not applicable to this article as no datasets were generated or analysed during the current study.
\vskip1cm

\bibliographystyle{plain}
\bibliography{bibibi}

\end{document}